\newcommand{\myname}{\textbf{Marvin Dippell}}
\newcommand{\myemail}{\texttt{mdippell@unisa.it}}
\newcommand{\myaddress}{\begin{minipage}{8cm}
		\centering\small
		Dipartimento di Matematica\\
		Università degli Studi di Salerno\\
		via Giovanni Paolo II, 132\\
		84084 Fisciano (SA)\\
		Italy
	\end{minipage}
	\\}
\newcommand{\AuthorEmailOne}{\myemail}
\newcommand{\AuthorOne}{\myname}
\author{\AuthorOne\thanks{\AuthorEmailOne}{ }\\[0.5cm]
	       \myaddress\\
	}
\newcommand{\bibnote}[2]{\nocite{#1}\@namedef{#1chairxnote}{#2}}
\renewcommand{\to}{%
  \relax\if@display
    \expandafter\longrightarrow
  \else
    \expandafter\rightarrow
  \fi
}
\newcommand{\Shuffle}{\mathop{Sh}}
\newcommand{\shuffle}{\mathrm{sh}}
\newcommand{\RedDelta}{\cc{\Delta}}
\newcommand{\RedDeltaSh}{{\RedDelta_\shuffle}}
\newcommand{\RedSym}{\cc{\Sym}}
\newcommand{\Reals}{\mathbb{R}}
\newcommand{\Total}{{\scriptscriptstyle\script{T}}}
\newcommand{\Wobs}{{\scriptscriptstyle\script{N}}}
\newcommand{\Null}{{\scriptscriptstyle\script{0}}}
\newcommand{\Van}{{\scriptscriptstyle\script{I}}}
\newcommand{\TotalNotWobs}{{\scriptscriptstyle\faktor{\Total}{\Wobs}}}
\newcommand{\TotalNotNull}{{\scriptscriptstyle\faktor{\Total}{\Null}}}
\newcommand{\WobsNotNull}{{\scriptscriptstyle\faktor{\Wobs}{\Null}}}
\newcommand{\NullNotVan}{{\scriptscriptstyle\faktor{\Null}{\Van}}}
\newcommand{\WOBS}{\script{N}}
\newcommand{\NULL}{\script{0}}
\newcommand{\strtensor}[1][]{\mathbin{\boxtimes_{\scriptscriptstyle{#1}}}}
\newcommand{\ConDiffop}{\Diffop}
\newcommand{\ConVectFields}{\VectFields}
\newcommand{\ConCinfty}{\Cinfty}
\newcommand{\vanishing}{\spacename{I}}
\newcommand{\VectFields}{\mathfrak{X}}
\newcommand{\HC}{\operator{C}}
\newcommand{\HCdiff}{\HC_{\script{diff}}}
\newcommand{\HH}{\operator{H}}
\newcommand{\HHdiff}{\HH_{\script{diff}}}
\newcommand{\Ca}{{\script{ca}}}
\newcommand{\CCa}{\operator{C}_\Ca}
\newcommand{\HCa}{\operator{H}_\Ca}
\newcommand{\hkr}{\operator{hkr}}%
\title{Infinitesimal Star Products Compatible with Coisotropic Reduction}
\date{}
\begin{document}

\selectlanguage{english}

\maketitle	

\begin{abstract}
We determine infinitesimal star products on Poisson manifolds compatible with coisotropic reduction.
This is achieved by computing the second constraint Hochschild cohomology
of the constraint algebra of functions associated to any submanifold equipped with a simple distribution.
\end{abstract}

\tableofcontents
\newpage

\section{Introduction}
\label{sec:Introduction}

In deformation quantization \cite{bayen.et.al:1978a} a star product on a manifold $M$ is given by a deformation
in the sense of Gerstenhaber \cite{gerstenhaber:1963a,gerstenhaber:1964a,
gerstenhaber:1966a} of 
the point-wise product on $\Cinfty(M)$ by differential operators.
More specifically, it is given by an associative multiplication
$\star = \sum_{r=0}^{\infty} \hbar^r C_r$ on $\Cinfty(M)\formal{\hbar}$
such that $C_0$ is the point-wise product and all $C_r$ are bi-differential operators.
Given such a star product one can recover a Poisson structure on $M$ and the star product is considered to be a quantization of this Poisson manifold.
An important tool in deformation quantization is the differential Hochschild cohomology
$\HHdiff^\bullet(M)$, which controls the deformation theory of $\Cinfty(M)$.
In particular, equivalence classes of infinitesimal star products, i.e. deformations of the point-wise product up to first order, are given by the second Hochschild cohomology $\HHdiff^2(M)$.
Thus computing the Hochschild cohomology is of utmost importance in deformation quantization and
has been achieved by the classical Hochschild-Kostant-Rosenberg Theorem \cite{hochschild.kostant.rosenberg:1962a}, showing that
$\HHdiff^\bullet(M)$ agrees with the multivector fields on $M$.
The existence and classification of star products on Poisson manifolds was famously settled by Kontsevich \cite{kontsevich:1997:prea, kontsevich:2003a} by extending the Hochschild-Kostant-Rosenberg map to a quasi-isomorphism of $L_\infty$-algebras.

If a Poisson manifold $(M,\pi)$ carries the additional structure of a coisotropic submanifold
$C \subseteq M$ with a simple characteristic distribution $D \subseteq TC$ 
one can construct a Poisson structure $\pi_\red$ on the reduced manifold
$M_\red \coloneqq C/D$.
From the point of view of deformation quantization the question arises if one can find a star product on $M$ 
that is compatible with coisotropic reduction, i.e. which induces a star product on $M_\red$.
This is a special instance of the question if quantization commutes with reduction.
This problem has been studied in many different contexts over the years, see e.g. \cite{fedosov:1998a,reichert:2017a} for the case of Marsden-Weinstein reduction on symplectic manifolds,
\cite{bordemann.herbig.waldmann:2000a} for a BRST-like reduction, or \cite{esposito.kraft.schnitzer:2022a} for reduction with momentum maps on Poisson manifolds.

To solve this problem in the general case of a coisotropic submanifold of a Poisson manifold in \cite{dippell.esposito.waldmann:2019a, dippell.esposito.waldmann:2022a} \emph{constraint algebras} and their deformation theory were introduced.
The basic idea is to include the information needed for reduction, i.e. the submanifold $C$ and the distribution $D$, into a single object $\mathcal{M} = (M,C,D)$ called a \emph{constraint manifold}.
Then it can be shown that star products compatible with reduction correspond to deformations
of the constraint algebra $\Cinfty(\mathcal{M}) = (\Cinfty(M),\Cinfty(M)^D,\vanishing_C)$,
where $\Cinfty(M)^D$ denotes functions which are on the submanifold $C$ invariant along the distribution $D$ and $\vanishing_C$ is the vanishing ideal of $C$.
The deformation theory of $\Cinfty(\mathcal{M})$ is now controlled by what we call the constraint Hochschild cohomology $\HHdiff^\bullet(\mathcal{M})_\Wobs$.
While the zeroth and first constraint Hochschild cohomologies have been computed in \cite{dippell.esposito.waldmann:2022a}
the higher constraint Hochschild cohomologies are unknown.

Our main result is the computation of the second constraint Hochschild cohomology $\HHdiff^2(\mathcal{M})_\Wobs$ in \autoref{thm:SecondConCohom}.
This is achieved with the help of a constraint symbol calculus for multi-differential operators and a careful investigation of symbols corresponding to exact but not constraint exact cochains.
Moreover, we will show in \autoref{prop:InfinitesimalDeformations} that the terms in this cohomology which are not given
by bivector fields classify infinitesimal star products which are equivalent as star products, but not equivalent
when considered to be compatible with reduction.

\paragraph{Acknowledgements}
The author wants to thank Chiara Esposito and Stefan Waldmann for all the fruitful discussions,
as well as Antonio de Nicola for helpful remarks.
This work was supported by the
National Group for Algebraic and Geometric Structures, and their
Applications (GNSAGA – INdAM).

\newpage
\section{Constraint Manifolds and Differential Operators}
\label{sec:ConManifolds}

We introduce the basic objects, such as constraint manifolds, their vector fields
and differential operators.
Instead of presenting the full constraint language
we will restrict ourselves to the bare minimum needed for this work.
A more conceptual treatment of the contents of \autoref{sec:ConManifoldsVectFields} as well as proofs can be found in 
\cite{dippell.kern:2023a}.
For the symbol calculus of constraint differential operators as presented in \autoref{sec:ConDiffOps}
we follow the thesis
\cite{dippell:2023b}.
It should be noted that what we now call a constraint algebra differs slightly
from the coisotropic triples of algebras as considered in 
\cite{dippell.esposito.waldmann:2019a,dippell.esposito.waldmann:2022a}.

\subsection{Constraint Manifolds and their Vector Fields}
\label{sec:ConManifoldsVectFields}

A \emph{constraint manifold}
is a tuple 
\begin{equation}
	\mathcal{M} = (M,C,D)
\end{equation}
consisting of a smooth manifold $M$
together with a closed embedded
submanifold $\iota \colon C \hookrightarrow M$
and a simple distribution $D \subseteq TC$.
Since $D$ is simple we know that the leaf space
$\mathcal{M}_\red \coloneqq C / D$
carries a canonical smooth structure.

Associated to every constraint manifold there is its 
algebra of real- or complex-valued functions 
$\Cinfty(\mathcal{M}) \coloneqq
(\Cinfty(M), \Cinfty(\mathcal{M})_\Wobs, \Cinfty(\mathcal{M})_\Null)$
given by
\begin{equation}
\begin{split}
	\ConCinfty(\mathcal{M})_\Wobs
	&\coloneqq \left\{ f \in \Cinfty(M)
	\mid \Lie_X f\at{C} = 0 \text{ for all } X \in \Secinfty(D) 
	\right\},\\
	\ConCinfty(\mathcal{M})_\Null
	&\coloneqq \left\{ f \in \Cinfty(M)
	\mid f\at{C} = 0 \right\}.
\end{split}
\end{equation}
It forms a so-called \emph{strong constraint algebra},
in the sense that
$\ConCinfty(\mathcal{M})_\Wobs
\subseteq \ConCinfty(M)$
is a subalgebra and
$\ConCinfty(\mathcal{M})_\Null \subseteq \ConCinfty(\mathcal{M})_\Wobs$
is a two-sided ideal in $\ConCinfty(M)$.

As constraint manifolds correspond to strong constraint algebras,
so constraint vector bundles correspond to strong constraint modules.
For us the most important strong constraint module will be
the constraint vector fields 
$\VectFields(\mathcal{M})
\coloneqq (\VectFields(M),
\VectFields(\mathcal{M})_\Wobs,
\VectFields(\mathcal{M})_\Null)$
given by
\begin{equation}
\begin{split}
	\VectFields(\mathcal{M})_\Wobs
	&\coloneqq \left\{ X \in \Secinfty(TM) \bigm| X\at{C} \in \Secinfty(TC) \text{ and}\right.\\
	&\hspace{8.5em}\left.[X,Y] \in \Secinfty(D) \text{ for all } Y \in \Secinfty(D) \right\},\\
	\VectFields(\mathcal{M})_\Null
	&\coloneqq \left\{ X \in \Secinfty(TM) \bigm| X\at{C} \in \Secinfty(D) \right\}.
\end{split}
\end{equation}
This is a \emph{strong constraint module} over the strong constraint algebra
$\Cinfty(\mathcal{M})$ in the sense that
$\VectFields(\mathcal{M})_\Wobs \subseteq \VectFields(M)$
is a
$\Cinfty(\mathcal{M})_\Wobs$-submodule
and
$\VectFields(\mathcal{M})_\Null \subseteq \VectFields(\mathcal{M})_\Wobs$
is a 
$\Cinfty(M)$-submodule
of $\VectFields(M)$
such that additionally
$\Cinfty(\mathcal{M})_\Null \cdot \VectFields(M)
\subseteq \VectFields(\mathcal{M})_\Null$ holds.
From now on, when introducing new constraint objects, we will only specify the $\WOBS$- and $\NULL$-components.

Out of these constraint vector fields we can construct a graded strong constraint algebra
of multivector fields $\VectFields(\mathcal{M}) = \bigoplus_{k=0}^\infty \VectFields^k(\mathcal{M})$ by setting
\begin{equation}
\begin{split}
	\ConVectFields^k(\mathcal{M})_\Wobs
	&\coloneqq \Anti^k\VectFields(\mathcal{M})_\Wobs
	+ \Anti^{k-1}\VectFields(M)
	\wedge \VectFields(\mathcal{M})_\Null,
	\\
	\ConVectFields^k(\mathcal{M})_\Null
	&\coloneqq \Anti^{k-1} \VectFields(M)
	\wedge \VectFields(\mathcal{M})_\Null.
\end{split}
\end{equation}
Another related construction is that of the graded symmetric constraint algebra
$\Sym\VectFields(\mathcal{M}) = \bigoplus_{k=0}^\infty\Sym^k\VectFields(\mathcal{M})$
given by
\begin{equation}
\begin{split}
	(\Sym^k \VectFields(\mathcal{M}))_\Wobs
	&\coloneqq \Sym^k \VectFields(\mathcal{M})_\Wobs,\\
	(\Sym^k \VectFields(\mathcal{M}))_\Null
	&\coloneqq \Sym^{k-1} \VectFields(\mathcal{M})_\Wobs
	\vee \VectFields(\mathcal{M})_\Null.
\end{split}
\end{equation}
Note, that even though $\Sym \VectFields(\mathcal{M})$
is a strong constraint $\Cinfty(\mathcal{M})$-module
it is in general not a \emph{strong} constraint algebra, in the sense that
$(\Sym \VectFields(\mathcal{M}))_\Null$
is a two-sided ideal only in
$(\Sym \VectFields(\mathcal{M}))_\Wobs$.
We will also need the reduced symmetric algebra
$\RedSym\ConVectFields(\mathcal{M})$
defined by
$\RedSym\VectFields(\mathcal{M})
\coloneqq \bigoplus_{k=1}^\infty\Sym^k\VectFields(\mathcal{M})$.

\begin{remark}
	In the language of \cite{dippell.kern:2023a} we constructed
	$\Anti\ConVectFields(\mathcal{M})$ using the tensor product $\strtensor$,
	while in the construction of $\Sym\ConVectFields(\mathcal{M})$
	the tensor product $\tensor$ was used.
\end{remark}

All of the above constructions are compatible with reduction in the sense that the
quotient of their $\WOBS$- by their $\NULL$-components yields the classical objects
on $\mathcal{M}_\red$.
In particular, we have
\begin{align}
	\Cinfty(\mathcal{M})_\red 
	&\coloneqq \Cinfty(\mathcal{M})_\Wobs / \Cinfty(\mathcal{M})_\Null
	\simeq \Cinfty(\mathcal{M}_\red),
	\\
	\ConVectFields^k(\mathcal{M})_\red
	&\coloneqq \ConVectFields^k(\mathcal{M})_\Wobs / \ConVectFields^k(\mathcal{M})_\Null
	\simeq \ConVectFields^k(\mathcal{M}_\red),
	\\
	\Sym^k\VectFields(\mathcal{M})_\red
	&\coloneqq (\Sym^k\VectFields(\mathcal{M}))_\Wobs / (\Sym^k\VectFields(\mathcal{M}))_\Null
	\simeq \Sym^k\VectFields(\mathcal{M}_\red).
\end{align}

\begin{example}\
	\begin{examplelist}
		\item Let $(M,\pi)$ be a Poisson manifold and $C \subseteq M$ a coisotropic submanifold.
		Moreover, assume that the characteristic distribution $D_C \subseteq TC$ spanned by the hamiltonian vector fields $X_f$ for $f \in \vanishing_C$
		is simple.
		Then $\mathcal{M} = (M,C,D_C)$ is a constraint manifold and $\mathcal{M}_\red = M / D_C$ is the manifold obtained by the usual coisotropic reduction procedure.
		Moreover, we have $\pi \in \ConVectFields^2(\mathcal{M})_\Wobs$
		and $[\pi] \in \ConVectFields^2(\mathcal{M})_\red$
		is the reduced Poisson bivector field on $\mathcal{M}_\red$.
		\item Conversely, given a constraint manifold $\mathcal{M} = (M,C,D)$
		and a Poisson bivector field $\pi \in \ConVectFields^2(\mathcal{M})_\Wobs$
		it is not hard to see that $C \subseteq (M,\pi)$ is a coisotropic submanifold with characteristic distribution $D_C \subseteq D$.
	\end{examplelist}	
\end{example}

\subsection{Constraint Differential Operators and their Symbols}
\label{sec:ConDiffOps}

For a smooth manifold $M$ we denote the multi-differential operators of $\Cinfty(M)$ by
$\Diffop^n(M)$,
where $n$ is the number of inputs.
We will always assume that (multi-)differential operators vanish on constant functions.
Given a torsion-free covariant derivative $\nabla$ on $M$
it is well-known that one can construct a full symbol calculus for $\Diffop^\bullet(M)$, in the sense that one obtains an isomorphism
\begin{equation}
	\Op^\nabla
		\colon
	\Tensor^\bullet \RedSym \VectFields(M)
		\to
	\Diffop^\bullet(M)
\end{equation}
of filtered $\Cinfty(M)$-modules,
see \cite[Chap.~IV,
§9]{palais:1965a} or \cite[App.~A]{waldmann:2007a} for a modern textbook.
Here $\Tensor^\bullet\RedSym\VectFields(M)$ denotes the tensor algebra of the 
reduced symmetric algebra of $\VectFields(M)$ graded by tensor factors.

Now given a constraint manifold $\mathcal{M} = (M,C,D)$ one can equip the multi-differential operators with a constraint structure.
For this we define
$\ConDiffop^n(\mathcal{M})_\Wobs$
to be those multi-differential operators 
$D \in \Diffop^n(M)$
such that
\begin{align}
	D(f_1, \dotsc, f_n) \in \Cinfty(\mathcal{M})_\Wobs
	&\text{ if }
	f_1,\dotsc, f_n \in \Cinfty(\mathcal{M})_\Wobs
	\\
	\shortintertext{and}
	\begin{split}
		D(f_1, \dotsc, f_n) \in \Cinfty(\mathcal{M})_\Null
		&\text{ if }
		f_1,\dotsc, f_n \in \Cinfty(\mathcal{M})_\Wobs
		\\
		&\text{ and }
		f_i \in \Cinfty(\mathcal{M}_\Null)
		\text{ for some }
		i \in \{1,\dotsc,n\}.
	\end{split}
\intertext{Moreover, we define $\ConDiffop^n(\mathcal{M})_\Null$
as those multi-differential operators
$D \in \Diffop^n(M)$
such that}
D(f_1, \dotsc, f_n) \in \Cinfty(\mathcal{M})_\Null
&\text{ if }
f_1,\dotsc, f_n \in \Cinfty(\mathcal{M})_\Wobs.
\end{align}
In order to extent the symbol calculus to constraint manifolds we need to consider \emph{constraint covariant derivatives},
which are covariant derivatives on $M$ such that additionally
we have
\begin{equation}
\begin{split}
	\nabla_X Y \in \VectFields(\mathcal{M})_\Wobs
	&\text{ for all }
	X \in \VectFields(\mathcal{M})_\Wobs
	\text{ and }
	Y \in \VectFields(\mathcal{M})_\Wobs,
	\\
	\nabla_X Y \in \VectFields(\mathcal{M})_\Null
	&\text{ for all }
	X \in \VectFields(\mathcal{M})_\Wobs
	\text{ and }
	Y \in \VectFields(\mathcal{M})_\Null,
	\\
	\nabla_X Y \in \VectFields(\mathcal{M})_\Null
	&\text{ for all }
	X \in \VectFields(\mathcal{M})_\Null
	\text{ and }
	Y \in \VectFields(\mathcal{M})_\Wobs.
\end{split}
\end{equation}
As a last ingredient for the constraint symbol calculus we need to consider a constraint version of $\Tensor^\bullet\RedSym\VectFields(M)$.
Thus let us define
\begin{equation}
\begin{split}
	\bigl(\Tensor^n\RedSym\VectFields(\mathcal{M})\bigr)_\Wobs
	&\coloneqq \Tensor^n (\RedSym\VectFields(\mathcal{M}))_\Wobs
	+ \sum_{i=1}^n \Tensor^{i-1}\Sym \VectFields(M)
	\tensor (\Sym\VectFields(\mathcal{M}))_\Null
	\tensor \Tensor^{n-i}\Sym\VectFields(M),
	\\
	\bigl(\Tensor^n\RedSym\VectFields(\mathcal{M})\bigr)_\Null
	&\coloneqq \sum_{i=1}^n \Tensor^{i-1}\Sym \VectFields(M)
	\tensor (\Sym\VectFields(\mathcal{M}))_\Null
	\tensor \Tensor^{n-i}\Sym\VectFields(M).
\end{split}
\end{equation}
With this we obtain a constraint version of the symbol calculus.

\begin{proposition}[Constraint multisymbol calculus]
	\label{prop:ConMultSymbolCalculus}
	Let $\mathcal{M} = (M,C,D)$ be a constraint manifold.
	\begin{propositionlist}
		\item There exists a torsion-free constraint covariant derivative on $\mathcal{M}$.
		\item For any torsion-free constraint covariant derivative $\nabla$ the associated isomorphism
		\begin{equation}
			\Op^\nabla
				\colon
			\Tensor^\bullet \RedSym \VectFields(M)
				\to
			\ConDiffop^\bullet(M),
		\end{equation}
		restricts to isomorphisms
		\begin{align}
			\label{eq:RestrictedOp_N}
			\Op^\nabla
				\colon
			\bigl(\Tensor^\bullet\RedSym\VectFields(\mathcal{M})\bigr)_\Wobs
				\to
			\ConDiffop^\bullet(\mathcal{M})_\Wobs,
			\\
			\shortintertext{and}
			\Op^\nabla
				\colon
			\bigl(\Tensor^\bullet \RedSym \VectFields(\mathcal{M})\bigr)_\Null
				\to
			\ConDiffop^\bullet(\mathcal{M})_\Null.
			\label{eq:RestrictedOp_0}
		\end{align}
	\end{propositionlist}	
\end{proposition}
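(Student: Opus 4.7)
For part (i), the strategy is a local construction combined with a cutoff. Since $C$ is closed and embedded in $M$ and $D \subseteq TC$ is simple, there is a tubular neighborhood $U$ of $C$ equipped with foliated coordinates $(x^a, y^\alpha, z^\mu)$ in which $C = \{x = 0\}$ and the leaves of $D$ on $C$ are the fibers of the projection to the $z$-coordinates. The flat coordinate connection $\nabla^U$ associated with such a chart is torsion-free and, by a direct verification on coordinate vector fields, satisfies all three constraint compatibility conditions. Picking any torsion-free connection $\nabla'$ on $M$ and a cutoff $\chi \in \Cinfty(M)$ with $\chi = 1$ on a neighborhood of $C$ and $\chi$ compactly supported in $U$, the combination $\nabla \coloneqq \chi \nabla^U + (1-\chi)\nabla'$ (where $\chi \nabla^U$ is extended by zero) is a torsion-free connection on $M$ that agrees with $\nabla^U$ in a neighborhood of $C$. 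Since the three constraint conditions only constrain $\nabla_X Y$ and $[\nabla_X Y, Z]$ at points of $C$, hence depend only on germs of $\nabla$ along $C$, the connection $\nabla$ inherits them from $\nabla^U$.

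For part (ii), the forward direction exploits the slot-wise factorization of the multi-operator symbol,
\begin{equation*}
	\Op^\nabla(S_1 \tensor \cdots \tensor S_n)(f_1, \ldots, f_n)
	= \Op^\nabla(S_1)(f_1) \cdots \Op^\nabla(S_n)(f_n),
\end{equation*}
combined with the strong-constraint ideal structure of $\Cinfty(\mathcal{M})$, which together reduce the problem to the single-input case. A short induction on symmetric degree, using the three axioms on $\nabla$ at each recursive step of the classical symbol formula, then shows that $\Op^\nabla$ sends $\Sym^\bullet \VectFields(\mathcal{M})_\Wobs$ into single differential operators preserving $\Cinfty(\mathcal{M})_\Wobs$ and sends $\Sym^{\bullet-1}\VectFields(\mathcal{M})_\Wobs \vee \VectFields(\mathcal{M})_\Null$ into operators mapping $\Cinfty(\mathcal{M})_\Wobs$ to $\Cinfty(\mathcal{M})_\Null$. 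Reassembling slot by slot immediately yields that \eqref{eq:RestrictedOp_N} and \eqref{eq:RestrictedOp_0} are well-defined.

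The main obstacle is the reverse containment, i.e.\ showing that the total symbol of an arbitrary constraint operator already lives in the prescribed subspace. Given $D \in \ConDiffop^n(\mathcal{M})_\Wobs$, I would argue by induction on the total order, using the adapted coordinates from (i) to read off the principal symbol in each slot. The two constraint conditions on $D$ — that a $\Null$ input in any slot forces a $\Null$ output, and that all-$\Wobs$ inputs force a $\Wobs$ output — translate into a dichotomy for each monomial appearing in the coordinate expansion of the principal symbol: either every tensor factor lies in $\RedSym\VectFields(\mathcal{M})_\Wobs$, or at least one factor lies in $(\Sym\VectFields(\mathcal{M}))_\Null$. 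These are precisely the two summands in the definition of $(\Tensor^n\RedSym\VectFields(\mathcal{M}))_\Wobs$. Subtracting $\Op^\nabla$ applied to this principal symbol from $D$ yields a constraint operator of strictly lower total order, closing the induction. The $\Null$-case proceeds in the same way while tracking only one of the two conditions.
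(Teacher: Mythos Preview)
For part (ii) your plan matches the paper's: it writes down the explicit formula
\[
\Op^\nabla(X_1\tensor\cdots\tensor X_n)(f_1,\ldots,f_n)
= \frac{1}{k_1!\cdots k_n!}\,\inss(X)\bigl(\SymD^{k_1}f_1\tensor\cdots\tensor\SymD^{k_n}f_n\bigr)
\]
via the symmetrized covariant derivative and declares the restriction properties straightforward from there. Your slot-wise factorization and induction on symmetric degree for the forward inclusion, together with the principal-symbol induction for surjectivity, is a reasonable way to make ``straightforward'' precise.

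Part (i), however, has a genuine gap. You assert that a tubular neighbourhood $U$ of $C$ carries a \emph{single} foliated chart $(x^a,y^\alpha,z^\mu)$ with $C=\{x=0\}$ and the $D$-leaves cut out by constant $z$. Such charts exist only locally; unless both $C$ and $C/D$ happen to be coverable by one chart, no global $(x,y,z)$ exists, so your $\nabla^U$ is not defined on all of $U$. The cutoff $\chi$ handles the transition from $U$ to its complement but not the transitions between adapted charts inside $U$. You can repair this by covering $U$ with adapted charts and averaging the local flat connections against a partition of unity, but there is a further subtlety: the functions $\rho_\alpha$ must be chosen in $\Cinfty(\mathcal{M})_\Wobs$ (i.e.\ $D$-invariant on $C$), because $\VectFields(\mathcal{M})_\Wobs$ is only a $\Cinfty(\mathcal{M})_\Wobs$-module, not a $\Cinfty(M)$-module --- a convex combination with an arbitrary smooth $\rho$ produces a term $Z(\rho)\cdot(\nabla^1_XY-\nabla^2_XY)$ in the bracket test that need not lie in $\Secinfty(D)$.

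The paper sidesteps this entirely by invoking a \emph{global} adapted dual basis $(e_i,e^i)$ of $T\mathcal{M}$ (from \cite{dippell.kern:2023a}) and setting $\nabla_XY\coloneqq\sum_i\Lie_X(e^i(Y))\,e_i$; the constraint compatibilities then follow in one line from the adaptedness of the basis, with no patching required.
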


\begin{proof}
	By \cite[Thm. 3.32]{dippell.kern:2023a} there exists a dual basis of $\VectFields(M)$ which is adapted to the submanifold $C$
	and the distribution $D$ such that $Y = \sum_{i=1}^N e^i(Y) e_i$ for any $Y \in \VectFields(M)$.
	Then it is an easy check that
	\begin{equation*}
		\nabla_X Y \coloneqq \sum_{i=1}^{N} \Lie_X(e^i(Y))e_i,
	\end{equation*}
	where $X,Y \in \VectFields(M)$,
	defines a constraint covariant derivative on $\mathcal{M}$.
	For the second part, note that one way to define $\Op^\nabla$ is by
	\begin{equation*}
		\label{eq:OpX}
            \Op^\nabla(X_1 \tensor \dots \tensor X_n)(f_1, \ldots, f_n)
            =
            \frac{1}{k_1! \cdots k_n!}
            \inss(X) \big(
            \SymD^{k_1} f_1 \tensor \cdots \tensor \SymD^{k_n} f_n
            \big),
	\end{equation*}
	where $X_i \in \RedSym^{k_i}\VectFields(M)$ and $f_i \in \Cinfty(M)$ for all $i = 1,\dotsc, n$.
	Here
	$D \colon \Sym^k\Secinfty(T^*M)
	\to \Sym^{k+1}\Secinfty(T^*M)$
	is the symmetrized covariant derivative
	defined by
	\begin{equation*}
		(\SymD \alpha)(X_0, \ldots, X_k)
		=
		\sum_{i=0}^k
		\nabla_{X_i} \alpha(X_0, \stackrel{i}{\ldots}, X_k)
		-
		\sum_{i \ne j}
		\alpha(\nabla_{X_i} X_j,
		X_0, \stackrel{i}{\ldots} \, \stackrel{j}{\ldots}, X_k),
	\end{equation*}
	where $X_0, \dotsc, X_k \in \VectFields(M)$.
	With these formulas it is now straightforward to check that $\Op^\nabla$
	restricts as in \eqref{eq:RestrictedOp_N} and \eqref{eq:RestrictedOp_0}.
\end{proof}

Note again that all the above results are compatible with reduction.
More precisely, every constraint covariant derivative induces
a covariant derivative $\nabla^\red$ on $\mathcal{M}_\red$
and its associated symbol calculus $\Op^{\nabla^\red}$
agrees with the isomorphism induced by $\Op^{\nabla}$.

\section{Constraint Hochschild Cohomology and Infinitesimal Deformations}
\label{sec:ConStarProducts}

In deformation quantization it is well-known that the quantization of Poisson structures
by differential operators is governed by the differential Hochschild complex
$\HCdiff^\bullet(M) \coloneqq \Diffop^\bullet(M)$ of the algebra $\Cinfty(M)$.
We recall some basics of the deformation theory of constraint algebras from \cite{dippell.esposito.waldmann:2022a} before explicitly computing the second constraint Hochschild cohomology.

\subsection{Differential Constraint Hochschild Cohomology}
\label{sec:ConHochschildCohomology}

In \cite{dippell.esposito.waldmann:2022a} it was shown that deformations of algebras compatible with reduction
are governed by the constraint Hochschild complex.
Since we are interested in deformations by differential operators we will in the following consider the constraint differential Hochschild complex
$\HCdiff^\bullet(\mathcal{M})$, which is explicitly given by
\begin{equation}
\begin{split}
	\HCdiff^n(\mathcal{M})_\Wobs
	&\coloneqq \Diffop^n(\mathcal{M})_\Wobs,
	\\
	\HCdiff^n(\mathcal{M})_\Null
	&\coloneqq \Diffop^n(\mathcal{M})_\Null,
\end{split}
\end{equation}
together with the classical Hochschild differential
$\delta \colon \HCdiff^\bullet(M) \to \HCdiff^{\bullet+1}(M)$
given by
\begin{equation}
\begin{split}
	(\delta D)(f_0,\dotsc, f_n)
		&= f_0 D(f_1,\dotsc, f_n)
		+ (-1)^n D(f_0,\dotsc,f_{n-1}) f_n\\
		&\quad+ \sum_{i=0}^{n} (-1)^{i+1} D(f_0,\dotsc, f_i f_{i+1},\dotsc, f_n),
\end{split}
\end{equation}
for $D \in \HCdiff^n(M)$, and $f_0,\dotsc,f_n \in \Cinfty(M)$.
It is easy to see that $\HCdiff^n(\mathcal{M})_\Wobs$
is a subcomplex of $\HCdiff^n(M)$.
This allows us to consider the constraint Hochschild cohomology
$\HHdiff^\bullet(\mathcal{M})_\Wobs$ 
given by the cohomology of the subcomplex
$\HCdiff^\bullet(\mathcal{M})_\Wobs$,
as well as $\HHdiff^\bullet(\mathcal{M})_\Null \subseteq \HHdiff^\bullet(\mathcal{M})_\Wobs$ induced by the subset $\HCdiff^\bullet(\mathcal{M})_\Null \subseteq \HCdiff^\bullet(\mathcal{M})_\Wobs$.
Moreover, there are canonical morphisms $\HHdiff^\bullet(\mathcal{M})_\Wobs \to \HHdiff^\bullet(M)$ and
$\HHdiff^\bullet(\mathcal{M})_\Wobs \to \HHdiff^\bullet(\mathcal{M}_\red)$.
For more details, we refer to \cite{dippell.esposito.waldmann:2022a}.

The classical Hochschild-Kostant-Rosenberg Theorem computes the differential Hochschild
cohomology for a given smooth manifold $M$, see \cite{hochschild.kostant.rosenberg:1962a} as well as \cite{dippell.esposito.schnitzer.waldmann:2024a:pre} for a recent improved version.
In particular, it states that 
$\hkr \colon \VectFields^\bullet(M) \to \HCdiff^\bullet(M)$
given by
\begin{equation}
	\label{eq:hkrMap}
	\hkr(X_1 \wedge \dots \wedge X_n)
	\coloneqq
	\frac{1}{n!} \sum_{\sigma \in S_n} \sign(\sigma) \cdot \Lie_{X_{\sigma(1)}} \tensor \dots \tensor \Lie_{X_{\sigma(n)}}
\end{equation}
is a quasi-isomorphism, i.e. an isomorphism in cohomology.
Here we consider $\VectFields^\bullet(M)$ to be equipped with the zero differential.

We now return to the constraint situation.
\begin{proposition}
	\label{prop:hkrToWobs}
	Let $\mathcal{M} = (M,C,D)$ be a constraint manifold.
	\begin{propositionlist}
		\item The map
		$\hkr \colon
		\VectFields^\bullet(M)
		\to
		\HCdiff^\bullet(M)$
		restricts to a morphism
		$\hkr \colon \ConVectFields^\bullet(\mathcal{M})_\Wobs \to \HCdiff^\bullet(\mathcal{M})_\Wobs$
		of complexes.
		\item In degree $0$ and $1$ the morphism $\hkr$ is an isomorphism in constraint cohomology, in particular we have
		\begin{equation}
			\HHdiff^0(\mathcal{M})_\Wobs \simeq \Cinfty(\mathcal{M})_\Wobs
			\qquad\text{and}\qquad
			\HHdiff^1(\mathcal{M})_\Wobs \simeq \VectFields(\mathcal{M})_\Wobs.
		\end{equation}
		\item The morphism in constraint cohomology induced by $\hkr$ is injective in all degrees.
	\end{propositionlist}	
\end{proposition}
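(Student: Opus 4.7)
For (i) my plan is a direct verification from the explicit formula \eqref{eq:hkrMap}. I would first record three preparatory facts: first, $\Cinfty(\mathcal{M})_\Wobs$ is preserved under $\Lie_X$ for $X \in \VectFields(\mathcal{M})_\Wobs$ -- this follows by writing $\Lie_Y \Lie_X f = \Lie_X \Lie_Y f + \Lie_{[Y,X]} f$ for $Y \in \Secinfty(D)$ and observing that $X|_C \in TC$ forces $\Lie_X(\Lie_Y f)$ to vanish on $C$ (since $\Lie_Y f$ itself vanishes on $C$), while $[X,Y] \in \Secinfty(D)$ kills the second term; second, any $X \in \VectFields(\mathcal{M})_\Null$ sends $\Cinfty(\mathcal{M})_\Wobs$ into $\Cinfty(\mathcal{M})_\Null$ because $X|_C$ is a section of $D$ and $f|_C$ is $D$-invariant; third, $\Cinfty(\mathcal{M})_\Null$ is a two-sided ideal in $\Cinfty(M)$. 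Combined with the fact that $\Cinfty(\mathcal{M})_\Wobs$ is an algebra, these check term by term on \eqref{eq:hkrMap} both defining properties of $\Diffop^n(\mathcal{M})_\Wobs$, separately for generators with all factors in $\VectFields(\mathcal{M})_\Wobs$ and for generators containing one factor in $\VectFields(\mathcal{M})_\Null$; the same bookkeeping lifts to show $\hkr(\ConVectFields^n(\mathcal{M})_\Null) \subseteq \HCdiff^n(\mathcal{M})_\Null$.

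For (ii), degree zero is immediate, since $\delta$ vanishes on $0$-cochains by commutativity and $\hkr$ is the identity. In degree one there are no coboundaries, so it suffices to identify the cocycles in $\HCdiff^1(\mathcal{M})_\Wobs$ with $\VectFields(\mathcal{M})_\Wobs$. A $\delta$-closed element of $\HCdiff^1(\mathcal{M})_\Wobs$ is a derivation, hence a vector field $X$ on $M$ preserving both $\Cinfty(\mathcal{M})_\Wobs$ and $\Cinfty(\mathcal{M})_\Null$. Preservation of the vanishing ideal gives $X|_C \in \Secinfty(TC)$. For the bracket condition I would take $Y \in \Secinfty(D)$ and any $D$-invariant $g$ on $C$, lift $g$ to some $f \in \Cinfty(\mathcal{M})_\Wobs$, use that $X(f)|_C = X|_C(g)$ is itself $D$-invariant to obtain $Y(X|_C(g)) = 0$, and combine with $X|_C(Y(g)) = 0$ to conclude $[Y, X|_C](g) = 0$. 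Since $D$-invariant functions separate tangent directions in the leaf space $\mathcal{M}_\red$, this forces $[X|_C, Y] \in \Secinfty(D)$ and hence $X \in \VectFields(\mathcal{M})_\Wobs$.

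Part (iii) is then a clean consequence of classical HKR. Suppose $X \in \ConVectFields^n(\mathcal{M})_\Wobs$ and $\hkr(X) = \delta D$ for some $D \in \HCdiff^{n-1}(\mathcal{M})_\Wobs$. Since both sides also live in the ambient Hochschild complex $\HCdiff^\bullet(M)$, the equality implies $[\hkr(X)] = 0$ in $\HHdiff^n(M)$; the classical Hochschild-Kostant-Rosenberg isomorphism $\VectFields^\bullet(M) \cong \HHdiff^\bullet(M)$ then forces $X = 0$. The only delicate step in the whole proposition is the bracket argument in degree one of (ii); part (i) is combinatorial bookkeeping and (iii) reduces in a single line to the unconstrained theorem.
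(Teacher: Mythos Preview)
Your proof is correct and follows the same approach as the paper. The paper's own proof is terse: for (i) it merely says the claim ``directly follows from the definitions'', for (ii) it cites \cite{dippell.esposito.waldmann:2022a}, and for (iii) it invokes the classical HKR map exactly as you do (phrasing it via the antisymmetry of $\hkr(X)$ versus $\delta D$, but the substance is the same reduction to injectivity of $\hkr$ on $\HHdiff^\bullet(M)$). Your treatment of (ii) is more self-contained than the paper's citation, and the bracket argument using that $D$-invariant functions on $C$ separate directions transverse to $D$ (since $D$ is simple) is the right idea; one small omission in your list of preparatory facts for (i) is that $X \in \VectFields(\mathcal{M})_\Wobs$ also preserves $\Cinfty(\mathcal{M})_\Null$ (needed for the second defining condition of $\Diffop^n(\mathcal{M})_\Wobs$ when all tensor factors lie in $\VectFields(\mathcal{M})_\Wobs$), but this is immediate from $X|_C \in \Secinfty(TC)$ and you implicitly use it later anyway.
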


\begin{proof}
	The first part directly follows from the definitions of 
	$\ConVectFields^\bullet(\mathcal{M})_\Wobs$
	and $\HCdiff^\bullet(\mathcal{M})_\Wobs$,
	while the second part was shown for general constraint algebras
	in \cite{dippell.esposito.waldmann:2022a}.
	For the last part note that
	$\hkr(X) = [0] \in \HHdiff^\bullet(\mathcal{M})_\Wobs$
	if and only $\hkr(X) = \delta D$
	for some $D \in \HCdiff^{\bullet-1}(\mathcal{M})_\Wobs \subseteq \HCdiff^{\bullet-1}(M)$.
	Now since $\hkr(X)$ is totally antisymmetric and $\delta D$ is not, it follows $\hkr(X) = 0$ and thus by injectivity of the classical HKR map 
	$X = 0$.
\end{proof}

Nevertheless, $\hkr$ is not an isomorphism in constraint cohomology, as the next example shows.

\begin{example}
	Consider the constraint manifold
	$\mathcal{M} = (\Reals^{n_\Total}, \Reals^{n_\Wobs}, \Reals^{n_\Null})$
	with $n_\Total > n_\Wobs > n_\Null > 0$,
	where we consider $\Reals^{n_\Null}$ as a distribution on the submanifold
	$\Reals^{n_\Wobs} \subseteq \Reals^{n_\Total}$.
	It is clear that
	\begin{equation}
		\frac{\del^2}{\del x^{n_\Null}\del x^{n_\Total}} \in \Diffop^1(\Reals^{n_\Total}) = \HCdiff^1(\Reals^{n_\Total})
	\end{equation}
	is not constraint, since
	for $f \coloneqq x^{n_\Null} x^{n_\Total} \in \Cinfty(\mathcal{M})_\Null$
	we have
	$\frac{\del^2f}{\del x^{n_\Null} \del x^{n_\Total}} = 1 \notin \Cinfty(\mathcal{M})_\Null$.
	Nevertheless, applying the Hochschild differential yields
	\begin{equation}
		\label{eq:ExNonTrivClass}
		\delta\left(\frac{\del^2}{\del x^{n_\Null} \del x^{n_\Total}}\right)
		= - \frac{\del}{\del x^{n_\Null}} \tensor \frac{\del}{\del x^{n_\Total}} - \frac{\del}{\del x^{n_\Total}} \tensor \frac{\del}{\del x^{n_\Null}},
	\end{equation}
	and a straightforward check shows that this is an element in 
	$\HCdiff^\bullet(\mathcal{M})_\Wobs$.
	Moreover, it is clear that there cannot exist a constraint potential for \eqref{eq:ExNonTrivClass}.
	Thus \eqref{eq:ExNonTrivClass} defines a non-trivial cohomology class in $\HCdiff^2(\mathcal{M})_\Wobs$.
	Finally, since it is symmetric it can not be in the image of $\hkr$.
\end{example}

\subsection{Second Constraint Hochschild Cohomology}

While the zeroth and first constraint Hochschild cohomologies have been computed in \autoref{prop:hkrToWobs} we will now focus on determining the missing terms in the second constraint Hochschild cohomology.
For this we first use the symbol calculus from \autoref{prop:ConMultSymbolCalculus} to identify the constraint differential operators with their symbols.
Thus from now on let $\nabla$ be a torsion-free constraint covariant derivative on a constraint manifold
$\mathcal{M} = (M,C,D)$.

It is well-known that
$\Op^\nabla
	\colon
\Tensor^\bullet\RedSym\VectFields(M)
	\to
\HCdiff^\bullet(M)$
becomes an isomorphism of complexes when we equip
$\Tensor^\bullet\RedSym\VectFields(M)$
with the differential $\D$ defined by
\begin{equation}
	\D(\phi_1 \tensor \dots \tensor \phi_n)
	\coloneqq
	\sum_{i=1}^{n} (-1)^{i} \phi_1 \tensor \cdots \tensor \RedDeltaSh(\phi_i) \tensor \cdots \tensor \phi_n,
\end{equation}
for $\phi_1, \cdots, \phi_n \in \RedSym\VectFields(M)$,
where $\RedDeltaSh$ denotes the reduced shuffle coproduct on $\RedSym \VectFields(M)$, i.e.
\begin{equation}
	\RedDeltaSh(X_1 \vee \dots \vee X_k)
	\coloneqq \sum_{\ell=1}^{k-1} \sum_{\sigma \in \Shuffle(\ell,k-\ell)}
	(X_{\sigma(1)} \vee \cdots \vee X_{\sigma(\ell)}) \tensor (X_{\sigma(\ell+1)}  \vee \cdots \vee X_{\sigma(k)})
\end{equation}
for $X_1,\dotsc,X_k \in \VectFields(M)$,
where we sum over all so-called \emph{$(\ell,k-\ell)$-shuffles}, i.e. permutations $\sigma$ with
$\sigma(1) < \dots < \sigma(\ell,)$ and $\sigma(\ell+1) < \dots < \sigma(k)$.
We will denote $\Tensor^\bullet \RedSym \VectFields(M)$ equipped with the differential
$\D$ by $\CCa^\bullet(M)$.

\newpage

\begin{lemma}
	\label{lem:KernelD}
	Let $M$ be a manifold.
	\begin{lemmalist}
		\item The differential $\D$ is injective on $\RedSym^k\VectFields(M)$ for all $k \geq 2$.
		\item The differential $\D$ vanishes on $\VectFields(M)$.
	\end{lemmalist}
\end{lemma}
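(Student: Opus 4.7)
The plan is to handle part (ii) first, since it is essentially a remark on the empty sum in the definition of $\RedDeltaSh$, and then reduce part (i) to injectivity of $\RedDeltaSh$ on each $\RedSym^k\VectFields(M)$ with $k \geq 2$. In both cases we are applying $\D$ to a pure tensor with a single tensor factor, so by the defining formula $\D(\phi) = -\RedDeltaSh(\phi)$, and the statements translate directly into statements about $\RedDeltaSh$.

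For part (ii), note that for $X \in \VectFields(M) = \RedSym^1\VectFields(M)$ the range of $\ell$ in the definition of $\RedDeltaSh$ is $\ell = 1, \dots, 0$, which is empty. Hence $\RedDeltaSh$ vanishes identically on $\VectFields(M)$, and so does $\D$. This is also why we must assume $k \geq 2$ in part (i): on $\RedSym^1\VectFields(M)$ the map $\D$ is zero and thus certainly not injective.

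For part (i), my plan is to construct an explicit left inverse of $\RedDeltaSh$ on $\RedSym^k\VectFields(M)$ up to a nonzero scalar. Let $\pi_{k-1,1}$ denote the projection onto the component $\RedSym^{k-1}\VectFields(M) \tensor \RedSym^1\VectFields(M)$ and let $\mu$ be the symmetric multiplication $\phi \tensor X \mapsto \phi \vee X$. The $(k-1,1)$-shuffles are parametrised precisely by the choice of the single element $\sigma(k) \in \{1,\dots,k\}$ placed in the second slot, with the remaining $k-1$ indices appearing in increasing order in the first slot. Consequently, for $\phi = X_1 \vee \dots \vee X_k$,
\begin{equation*}
	\pi_{k-1,1}\RedDeltaSh(\phi)
	= \sum_{i=1}^{k} \bigl(X_1 \vee \cdots \widehat{X_i} \cdots \vee X_k\bigr) \tensor X_i,
\end{equation*}
and applying $\mu$ gives $k \cdot \phi$. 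Thus $\frac{1}{k}\mu \circ \pi_{k-1,1}$ is a left inverse to $\RedDeltaSh$, yielding the required injectivity.

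There is no genuine obstacle here; the whole argument is a short combinatorial check on shuffles. The only subtlety worth stressing is the interplay between the two parts: the very fact that the shuffle sum is reduced (no $\ell = 0$ or $\ell = k$ terms) is what forces $\D$ to vanish in part (ii), and yet leaves enough nontrivial components in part (i) that the $(k-1,1)$-projection already detects the full element up to the factor $k$.
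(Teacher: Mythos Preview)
Your proof is correct and follows essentially the same idea as the paper: both establish injectivity by composing $\RedDeltaSh$ with the symmetric multiplication to obtain a nonzero scalar multiple of the identity. The only difference is cosmetic---the paper multiplies all shuffle components at once and gets the factor $2^k-2$, whereas you project to the $(k-1,1)$-component first and get the factor $k$; part (ii) is handled identically.
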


\begin{proof}
	For the first part, note that
	\begin{equation*}
		\vee \circ \RedDeltaSh (X_1 \vee \dots \vee X_k) = (2^k - 2) \cdot (X_1 \vee \dots \vee X_k)
	\end{equation*}
	and hence $\RedDeltaSh$ is injective for $k \geq 2$.
	The second part follows directly from the definition of $\RedDeltaSh$.
\end{proof}

Identifying $\HCdiff^\bullet(M)$ with $\CCa^\bullet(M)$
the map \eqref{eq:hkrMap} becomes a quasi-isomorphism
$\hkr \colon \VectFields^\bullet(M) \to \CCa^\bullet(M)$ given by
\begin{equation}
	\hkr(X_1 \wedge \dots \wedge X_n)
	\coloneqq
	\AntiSymmetrizer(X_1 \tensor \dots \tensor X_n)
	=
	\frac{1}{n!} \sum_{\sigma \in S_n} \sign(\sigma) \cdot X_{\sigma(1)} \tensor \dots \tensor X_{\sigma(n)},
\end{equation}
for $X_1, \dotsc, X_n \in \VectFields(M)$.
In fact, it can be shown, see \cite{dippell.esposito.schnitzer.waldmann:2024a:pre}, that every
$\phi_1 \tensor \dots \phi_n \in \CCa^n(M)$ can be written as
\begin{equation}
	\label{eq:HKRRetract}
	\phi_1 \tensor \dots \tensor \phi_n
	= \hkr\bigl(\pr_1(\phi_1) \wedge \dots \wedge \pr_1(\phi_n)\bigr)
	+ \D H(\phi),
\end{equation}
where $\pr_1 \colon \RedSym\VectFields(M) \to \VectFields(M)$ denotes the projection onto
symmetric degree $1$ and $H(\phi) \in \CCa^{n-1}(M)$.

For a constraint manifold $\mathcal{M} = (M,C,D)$
it follows directly from \autoref{prop:ConMultSymbolCalculus}
that the map
$\Op^\nabla
	\colon
\bigl(\Tensor^\bullet\RedSym\ConVectFields(\mathcal{M})\bigr)_\Wobs
	\to
\ConDiffop^\bullet(\mathcal{M})_\Wobs$
is an isomorphism of complexes.
Hence we will in the following consider the complex
\begin{equation}
	\CCa^\bullet(\mathcal{M})_\Wobs
	\coloneqq (\Tensor^\bullet\RedSym\ConVectFields(\mathcal{M}))_\Wobs
\end{equation}
equipped with the differential $\D$.
In \autoref{prop:hkrToWobs} we showed that $\hkr$ induces an injection
\begin{equation}
	\hkr
		\colon
	\ConVectFields^2(\mathcal{M})_\Wobs
		\to
	\HCa^2(\mathcal{M})_\Wobs.
\end{equation}
In order to compute the full cohomology $\HCa^2(\mathcal{M})_\Wobs$
we need to find a complement of
$(\ConVectFields^2(\mathcal{M}))_\Wobs$
inside
$\HCa^2(\mathcal{M})_\Wobs$.
Since $\CCa^\bullet(\mathcal{M})_\Wobs$
is a subcomplex of $\CCa^\bullet(M)$
we need to understand which cocycles in $\CCa^\bullet(\mathcal{M})_\Wobs$ are exact, but do not admit a potential inside $\CCa^\bullet(\mathcal{M})_\Wobs$.
In other words:
Which non-constraint cochains
$\phi \in \CCa^\bullet(M)$
yield constraint coboundaries
$\D\phi \in \CCa^\bullet(\mathcal{M})_\Wobs$.

To explicitly compute the second constraint Hochschild cohomology we
need to choose some additional structures:
Let $U \subseteq M$ be a tubular neighbourhood of $C$ and choose a bump function $\chi$ with $\supp\chi \subseteq U$ and $\chi\at{C} = 1$.
With this we can define a prolongation map 
$\prol \colon \Cinfty(C) \to \Cinfty(M)$
such that $\iota^*\prol = \id_{\Cinfty(C)}$,
which induces an isomorphism
\begin{equation}
	\label{eq:DecompFunctions}
	\Cinfty(M) \simeq \vanishing_C \oplus \Cinfty(C)
\end{equation}
of vector spaces.

Similarly, we obtain a prolongation map 
$\prol \colon \Secinfty(\iota^\#TM) \to \Secinfty(TM)$ with
$\iota^\# \prol = \id_{\Secinfty(\iota^\#TM)}$ inducing an isomorphism
\begin{equation}
	\label{eq.DecompVectorFields}
	\VectFields(M)
		\simeq
	\vanishing_C \cdot \Secinfty(TM) \oplus \Secinfty(\iota^\#TM)
\end{equation}
of vector spaces.
Here $\iota^\#$ denotes the pull-back of vector bundles along the inclusion $\iota \colon C \hookrightarrow M$.
Note that $\Secinfty(\iota^\#TM)$ is given by sections of a vector bundle over the submanifold $C$.
We will in the following suppress this isomorphism in our notation and assume that any section on $C$ is extended to $M$ by $\prol$ if necessary.

Finally, by choosing subbundles
$D^\perp \subseteq TC$
and
$TC^\perp \subseteq \iota^\#TM$
such that
$TC = D \oplus D^\perp$
and
$\iota^\#TM = TC \oplus TC^\perp$
we obtain
\begin{equation}
	\label{eq:DecompPullBackBundle}
	\Secinfty(\iota^\#TM) 
		=
	\Secinfty(D) \oplus \Secinfty(D^\perp) \oplus \Secinfty(TC^\perp).
\end{equation}
Using these complements we introduce the following notations:
\begin{equation}
	\label{eq:SplittingSym}
\begin{split}
	\bigl(\RedSym^k\ConVectFields(\mathcal{M})\bigr)_\NullNotVan
	&\coloneqq
		\Sym^{k-1}\Secinfty(TC) \vee \Secinfty(D),
	\\
	\bigl(\RedSym^k\ConVectFields(\mathcal{M})\bigr)_\WobsNotNull
	&\coloneqq
	\RedSym^k \Secinfty(D^\perp),
	\\
	\bigl(\RedSym^k\ConVectFields(\mathcal{M})\bigr)_\TotalNotWobs
	&\coloneqq
	\Sym^{k-1}\Secinfty(\iota^\#TM) \vee \Secinfty(TC^\perp),
	\\
	\bigl(\RedSym^k\ConVectFields(\mathcal{M})\bigr)_\TotalNotNull
	&\coloneqq
	\Sym^{k-1}\Secinfty(\iota^\#TM) \vee \Secinfty(TC^\perp)
	\oplus
	\RedSym^k\Secinfty(D^\perp),
\end{split}
\end{equation}
for every $k\geq 1$.
Moreover, we will need the following two subspaces of $\Tensor^2\RedSym\VectFields(M)$:
\begin{equation}
	\label{eq:SplittingTensor}
\begin{split}
	\bigl(\Tensor^2\RedSym\ConVectFields(\mathcal{M})\bigr)_\NullNotVan
	&\coloneqq
	\bigl(\RedSym\ConVectFields(\mathcal{M})\bigr)_\NullNotVan
	\tensor
	\bigl(\RedSym\ConVectFields(\mathcal{M})\bigr)_\TotalNotNull
	\\
	&\phantom{\coloneqq}\oplus
	\bigl(\RedSym\ConVectFields(\mathcal{M})\bigr)_\TotalNotNull
	\tensor
	\bigl(\RedSym\ConVectFields(\mathcal{M})\bigr)_\NullNotVan
	\\
	&\phantom{\coloneqq}\oplus
	\bigl(\RedSym\ConVectFields(\mathcal{M})\bigr)_\NullNotVan
	\tensor
	\bigl(\RedSym\ConVectFields(\mathcal{M})\bigr)_\NullNotVan,
	\\
	\bigl(\Tensor^2\RedSym\ConVectFields(\mathcal{M})\bigr)_\TotalNotWobs
	&\coloneqq
	\bigl(\RedSym\ConVectFields(\mathcal{M})\bigr)_\TotalNotWobs
	\tensor
	\bigl(\RedSym\ConVectFields(\mathcal{M})\bigr)_\WobsNotNull
	\\
	&\phantom{\coloneqq}\oplus
	\bigl(\RedSym\ConVectFields(\mathcal{M})\bigr)_\WobsNotNull
	\tensor
	\bigl(\RedSym\ConVectFields(\mathcal{M})\bigr)_\TotalNotWobs
	\\
	&\phantom{\coloneqq}\oplus
	\bigl(\RedSym\ConVectFields(\mathcal{M})\bigr)_\TotalNotWobs
	\tensor
	\bigl(\RedSym\ConVectFields(\mathcal{M})\bigr)_\TotalNotWobs.
\end{split}
\end{equation}

\begin{lemma}
	\label{lem:DecompSymTenAlgebra}
	Let $\mathcal{M} = (M,C,D)$ be a constraint manifold.
		\begin{lemmalist}
		\item \label{prop:DecompSymTenAlgebra_Sym}
		There exists an isomorphism of vector spaces such that
		\begin{equation}
		\begin{split}
			\RedSym^k\VectFields(M)
			&\simeq \Cinfty(M) \cdot \bigl(\RedSym^k\ConVectFields(\mathcal{M})\bigr)_\Wobs
			\oplus \bigl(\RedSym^k\ConVectFields(\mathcal{M})\bigr)_\TotalNotWobs,
		\end{split}
		\end{equation}
		\item \label{prop:DecompSymTenAlgebra_Ten}
		There exists an isomorphism of vector spaces such that
		\begin{align}
			\Tensor^2\RedSym\ConVectFields(M)
			&\simeq \Cinfty(M) \cdot \bigl(\Tensor^2\RedSym\ConVectFields(\mathcal{M})\bigr)_\Wobs
			\oplus \bigl(\Tensor^2\RedSym\ConVectFields(\mathcal{M})\bigr)_\TotalNotWobs,
			\intertext{and}
			\bigl(\Tensor^2\RedSym\ConVectFields(\mathcal{M})\bigr)_\Null
			&\simeq \vanishing_C \cdot \Tensor^2\RedSym \VectFields(M)
			\oplus \bigl(\Tensor^2\RedSym\ConVectFields(\mathcal{M})\bigr)_\NullNotVan.
		\end{align}
	\end{lemmalist}
\end{lemma}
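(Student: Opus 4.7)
The plan is to reduce both decompositions to the adapted dual basis supplied by \cite[Thm.~3.32]{dippell.kern:2023a}, the same tool already invoked in the proof of \autoref{prop:ConMultSymbolCalculus}. First I would fix such a basis $\{e_i\}_{i \in I}$ of $\VectFields(M)$ whose index set partitions as $I = I_\Null \sqcup I_\WobsNotNull \sqcup I_\TotalNotWobs$, so that the $e_i$ restrict on $C$ to local frames of $D$, $D^\perp$ and $TC^\perp$ respectively, with $e_i \in \VectFields(\mathcal{M})_\Null$ for $i \in I_\Null$, $e_i \in \VectFields(\mathcal{M})_\Wobs$ for $i \in I_\Null \cup I_\WobsNotNull$, and the $e_i$ for $i \in I_\TotalNotWobs$ chosen as prolongations of sections of $TC^\perp$.

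For part (i), I would expand any $\phi \in \RedSym^k\VectFields(M)$ in monomials $f^\alpha \, e_{\alpha_1} \vee \dots \vee e_{\alpha_k}$ with $f^\alpha \in \Cinfty(M)$ and split the sum according to whether the multi-index $\alpha$ contains any index from $I_\TotalNotWobs$. Monomials with no such index have all factors in $\VectFields(\mathcal{M})_\Wobs$ and contribute to $\Cinfty(M) \cdot \bigl(\RedSym^k\ConVectFields(\mathcal{M})\bigr)_\Wobs$. Monomials with at least one such index are products of prolonged sections of $\iota^\#TM$ carrying a distinguished factor in $\Secinfty(TC^\perp)$, so they lie in $\bigl(\RedSym^k\ConVectFields(\mathcal{M})\bigr)_\TotalNotWobs$ as defined in \eqref{eq:SplittingSym}. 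Directness then follows from the $\Cinfty(M)$-linear independence of the adapted basis monomials.

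For part (ii), the first formula follows from the recipe of part (i) applied tensor slot by tensor slot: classifying $\phi_1 \tensor \phi_2 \in \Tensor^2\RedSym\VectFields(M)$ by which of the two slots hosts at least one $I_\TotalNotWobs$-index reproduces the three-summand description of $\bigl(\Tensor^2\RedSym\ConVectFields(\mathcal{M})\bigr)_\TotalNotWobs$ from \eqref{eq:SplittingTensor}, while the case where neither slot does gives $\Cinfty(M) \cdot \bigl(\Tensor^2\RedSym\ConVectFields(\mathcal{M})\bigr)_\Wobs$. The second formula, for $\bigl(\Tensor^2\RedSym\ConVectFields(\mathcal{M})\bigr)_\Null$, is more subtle: starting from an element there, I would split each coefficient $f^\alpha = f^\alpha_\vanishing + \prol(\iota^*f^\alpha)$ using $\Cinfty(M) \simeq \vanishing_C \oplus \Cinfty(C)$, collect all $\vanishing_C$-coefficient monomials into $\vanishing_C \cdot \Tensor^2\RedSym\VectFields(M)$, and observe that the remaining prolonged piece must, in order to still satisfy the $\Null$ condition, carry at least one basis factor indexed by $I_\Null$ in some symmetric slot inside one of the two tensor positions, which is precisely the content of $\bigl(\Tensor^2\RedSym\ConVectFields(\mathcal{M})\bigr)_\NullNotVan$ from \eqref{eq:SplittingTensor}.

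The principal obstacle is this last observation: the $\Null$ condition on $\Tensor^2\RedSym$ is globally phrased in terms of $\VectFields(\mathcal{M})_\Null$, while $\bigl(\Tensor^2\RedSym\ConVectFields(\mathcal{M})\bigr)_\NullNotVan$ is defined combinatorially via prolonged sections. Verifying that after extracting the $\vanishing_C$-residue the leftover $\Null$ element really lies in the $\NullNotVan$ subspace amounts to a case-by-case bookkeeping on adapted basis monomials. Once this is settled, directness of both formulas in (ii) follows from the $\Cinfty(M)$-linear independence of the basis together with the directness of $\Cinfty(M) \simeq \vanishing_C \oplus \Cinfty(C)$.
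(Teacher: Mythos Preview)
Your approach is essentially the same as the paper's: the paper's proof is a one-liner that simply invokes the tubular-neighbourhood/complement decompositions \eqref{eq:DecompFunctions}, \eqref{eq.DecompVectorFields}, \eqref{eq:DecompPullBackBundle}, and your adapted dual basis is precisely a concrete realization of those same splittings, so your monomial bookkeeping just makes explicit what the paper leaves implicit. One small remark: in part~(i) you should also split the coefficient $f^\alpha$ via $\Cinfty(M)\simeq\vanishing_C\oplus\Cinfty(C)$ (exactly as you do in part~(ii)), since $(\RedSym^k\ConVectFields(\mathcal{M}))_\TotalNotWobs$ is built from sections over $C$ and hence only absorbs prolonged coefficients---this is the role of \eqref{eq:DecompFunctions} in the paper's list.
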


\begin{proof}
	Choose a tubular neighbourhood and a cut-off function for $C$ as well as complements of $D$ and $TC$ as above.
	Then the decompositions \eqref{eq:DecompFunctions}, \eqref{eq.DecompVectorFields} and \eqref{eq:DecompPullBackBundle}
	yield the desired isomorphisms.
\end{proof}

\begin{lemma}
	\label{lem:deltaSplitting}
	For every $\psi \in \bigl(\RedSym\ConVectFields(\mathcal{M})\bigr)_\TotalNotWobs$
	it holds
	$
		\D\psi
		\in
		\bigl(\Tensor^2\RedSym\ConVectFields(\mathcal{M})\bigr)_\TotalNotWobs
		\oplus
		\bigl(\Tensor^2\RedSym\ConVectFields(\mathcal{M})\bigr)_\NullNotVan
	$.
\end{lemma}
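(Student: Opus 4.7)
The plan is to decompose $\psi$ into pure monomials and perform a bookkeeping case analysis on the shuffle coproduct. Since $\psi$ lies in tensor degree $1$, the differential reduces to $\D\psi = -\RedDeltaSh(\psi)$, so the claim becomes one about which tensor bidegrees appear. By bilinearity and the decomposition $\Secinfty(\iota^\#TM) = \Secinfty(D) \oplus \Secinfty(D^\perp) \oplus \Secinfty(TC^\perp)$ (with the prolongation convention in force), it suffices to treat monomials $\psi = X_1 \vee \cdots \vee X_k$ where each $X_i$ lies in exactly one of the three summands, and where, by the definition of $(\RedSym\ConVectFields(\mathcal{M}))_{\TotalNotWobs}$ in \eqref{eq:SplittingSym}, at least one $X_i$ is a section of $TC^\perp$.

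Each shuffle term of $\RedDeltaSh(\psi)$ has the form $\phi_L \tensor \phi_R$, where $\phi_L$ and $\phi_R$ are symmetric products of disjoint subsets of the $X_i$. I would classify each side according to the types of factors it inherits: if it carries a $TC^\perp$ factor, it lies in $(\RedSym\ConVectFields(\mathcal{M}))_{\TotalNotWobs}$; otherwise, if it carries a $D$ factor (the remaining factors then living in $\Secinfty(D) \oplus \Secinfty(D^\perp) = \Secinfty(TC)$), it lies in $(\RedSym\ConVectFields(\mathcal{M}))_{\NullNotVan}$; otherwise all its factors are in $D^\perp$ and it lies in $(\RedSym\ConVectFields(\mathcal{M}))_{\WobsNotNull}$. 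The crucial observation is that, since $\psi$ carries at least one $TC^\perp$ factor, in every shuffle term at least one of $\phi_L, \phi_R$ is of type $\TotalNotWobs$.

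A short case analysis on the type of the other side then closes the proof. If both sides are of type $\TotalNotWobs$, the term lies in the third summand of $(\Tensor^2\RedSym\ConVectFields(\mathcal{M}))_{\TotalNotWobs}$. If one side is of type $\WobsNotNull$ while the other is of type $\TotalNotWobs$, the term lies in the first or second summand of the same space. Finally, if one side is of type $\NullNotVan$, the term falls into $(\Tensor^2\RedSym\ConVectFields(\mathcal{M}))_{\NullNotVan}$ via the inclusion $(\RedSym\ConVectFields(\mathcal{M}))_{\TotalNotWobs} \subseteq (\RedSym\ConVectFields(\mathcal{M}))_{\TotalNotNull}$ read off from \eqref{eq:SplittingSym}. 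This last inclusion is the only point where some care with the definitions is needed; everything else is purely combinatorial bookkeeping on shuffles, so I do not expect any genuine obstacle.
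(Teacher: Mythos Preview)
Your proposal is correct and follows essentially the same approach as the paper: reduce to factorizing monomials, observe that in every shuffle term one tensor factor inherits a $TC^\perp$ vector and hence lies in $(\RedSym\ConVectFields(\mathcal{M}))_{\TotalNotWobs}$, and then match against the definitions \eqref{eq:SplittingTensor}. The paper simply compresses your explicit three-case analysis into the phrase ``a straightforward comparison with the definitions,'' but the content is the same.
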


\begin{proof}
	Let $\psi = X_1 \vee \dots \vee X_k \in \bigl(\RedSym^k\ConVectFields(\mathcal{M})\bigr)_\TotalNotWobs$ be given.
	Then by definition of $D$ we know
	\begin{equation*}
		\D\psi = \D(X_1 \vee \dots \vee X_k)
	= - \sum_{\ell=1}^{k-1} \sum_{\sigma \in \Shuffle(\ell,k-\ell)}
	(X_{\sigma(1)} \vee \cdots \vee X_{\sigma(\ell)})
	\tensor (X_{\sigma(\ell+1)}  \vee \cdots \vee X_{\sigma(k)}).
	\end{equation*}
	By definition of $\bigl(\RedSym\ConVectFields(\mathcal{M})\bigr)_\TotalNotWobs$ we can assume that $X_1 \in \Secinfty(TC^\perp)$,
	and hence we have 
	\begin{equation*}
		(X_{\sigma(1)} \vee \cdots \vee X_{\sigma(\ell)})
		\in \bigl(\RedSym\ConVectFields(\mathcal{M})\bigr)_\TotalNotWobs 
	\qquad\text{or}\qquad
		(X_{\sigma(\ell+1)}  \vee \cdots \vee X_{\sigma(k)})
		\in \bigl(\RedSym\ConVectFields(\mathcal{M})\bigr)_\TotalNotWobs
	\end{equation*}
	for every $\ell$ and every $\sigma$.
	Now a straightforward comparison with the definitions of
	$\bigl(\Tensor^2\RedSym\ConVectFields(\mathcal{M})\bigr)_\TotalNotWobs$
	and
	$\bigl(\Tensor^2\RedSym\ConVectFields(\mathcal{M})\bigr)_\NullNotVan$
	shows the result.
\end{proof}

With this we can now compute the full second constraint Hochschild cohomology:

\begin{theorem}\
	\label{thm:SecondConCohom}
	Let $\mathcal{M} = (M,C,D)$ be a constraint manifold
	and let $\nabla$ be a torsion-free constraint covariant derivative on $\mathcal{M}$.
	Then
	\begin{equation}
		\mathcal{U} \colon \ConVectFields^2(\mathcal{M})_\Wobs
			\oplus \RedSym \Secinfty(D) \vee \Secinfty(TC^\perp)
		\to
		\HHdiff^2(\mathcal{M})_\Wobs
	\end{equation}
	defined by
	\begin{equation}
		\mathcal{U}(X,\psi)
		\coloneqq \hkr(X) + [\Op^\nabla(\D\psi)]
	\end{equation}
	is an isomorphism of vector spaces.
\end{theorem}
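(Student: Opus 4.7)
The plan is to work on the symbol side via the isomorphism $\HHdiff^\bullet(\mathcal{M})_\Wobs \simeq \HH(\CCa^\bullet(\mathcal{M})_\Wobs, \D)$ afforded by \autoref{prop:ConMultSymbolCalculus}, and then to verify well-definedness, injectivity, and surjectivity of $\mathcal{U}$ separately. For well-definedness, each summand of $\RedDeltaSh(Y_1 \vee \dots \vee Y_m \vee Z)$ with $Y_i \in \Secinfty(D)$ and $Z \in \Secinfty(TC^\perp)$ is a tensor product whose two factors are either a pure symmetric product of $Y_i$'s — hence lying in $(\RedSym\VectFields(\mathcal{M}))_\NullNotVan$ because $\Secinfty(D) \subseteq \VectFields(\mathcal{M})_\Null$ — or contain the single $Z$-factor and thus lie in $(\RedSym\VectFields(\mathcal{M}))_\TotalNotWobs$. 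Comparing with the summands of \eqref{eq:SplittingTensor} yields $\D\psi \in (\Tensor^2\RedSym\VectFields(\mathcal{M}))_\NullNotVan$, so $\D\psi$ in fact lies in $\CCa^2(\mathcal{M})_\Null \subseteq \CCa^2(\mathcal{M})_\Wobs$ and the class $[\Op^\nabla(\D\psi)]$ is well-defined.

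For injectivity, suppose $\hkr(X) + \D\psi = \D\theta$ in $\CCa^2(\mathcal{M})_\Wobs$ for some $\theta \in (\RedSym\VectFields(\mathcal{M}))_\Wobs$. The reduced shuffle coproduct is cocommutative, so $\D\xi$ is invariant under the transposition $\tau$ swapping the two tensor slots, for every $\xi$; on the other hand $\hkr(X)$ is $\tau$-antisymmetric. Applying the antisymmetrizer $\frac{1}{2}(\mathrm{id} - \tau)$ isolates $\hkr(X) = 0$, whence $X = 0$ by classical HKR. The remaining equation $\D(\psi - \theta) = 0$, decomposed by symmetric degree $k$, gives via \autoref{lem:KernelD} that $\psi_k = \theta_k$ for every $k \geq 2$; but $\psi_k \in (\RedSym^k\VectFields(\mathcal{M}))_\TotalNotWobs$ and $\theta_k \in (\RedSym^k\VectFields(\mathcal{M}))_\Wobs$ sit in complementary summands of \autoref{lem:DecompSymTenAlgebra}, forcing $\psi_k = 0$, and hence $\psi = 0$.

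For surjectivity, let $\phi \in \CCa^2(\mathcal{M})_\Wobs$ be a cocycle. By the HKR retract \eqref{eq:HKRRetract} we have $\phi = \hkr(Y_1 \wedge Y_2) + \D H(\phi)$ with $H(\phi) \in \RedSym\VectFields(M)$. Using \autoref{lem:DecompSymTenAlgebra} together with the fine splitting \eqref{eq:SplittingSym} induced by $\iota^\#TM = D \oplus D^\perp \oplus TC^\perp$, I decompose $H(\phi)$ into (i) a part inside $(\RedSym\VectFields(\mathcal{M}))_\Wobs$, whose $\D$-image is a genuine constraint coboundary and can be dropped from the class; (ii) a part inside $\RedSym\Secinfty(D) \vee \Secinfty(TC^\perp)$, providing the candidate $\psi$; and (iii) residual $\TotalNotWobs$-pieces containing at least one $D^\perp$-factor. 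The cocycle condition $\D\phi = 0$ combined with \autoref{lem:deltaSplitting} propagates the constraint membership $\phi \in \CCa^2(\mathcal{M})_\Wobs$ to the terms in (iii) and to a correction of $Y_1 \wedge Y_2$, producing a representative of $[\phi]$ of the desired form with $X \in \VectFields^2(\mathcal{M})_\Wobs$.

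The main obstacle is this last bookkeeping step: showing systematically that the residual $D^\perp$-contributions in (iii), and the $\Cinfty(M)$-prefactors appearing in $\Cinfty(M) \cdot (\RedSym\VectFields(\mathcal{M}))_\Wobs$ from \autoref{lem:DecompSymTenAlgebra}, can be absorbed using only constraint coboundaries of elements in $(\RedSym\VectFields(\mathcal{M}))_\Wobs$. This will require a careful bidegree-by-bidegree tracking of how $\D$ sends the summands of \eqref{eq:SplittingSym} into those of \eqref{eq:SplittingTensor}, combined with repeated use of the injectivity of $\D$ on $\RedSym^{\geq 2}$ from \autoref{lem:KernelD}, in order to exclude any surviving non-$\RedSym\Secinfty(D) \vee \Secinfty(TC^\perp)$ component compatible with $\phi$ being constraint.
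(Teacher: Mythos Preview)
Your well-definedness and injectivity arguments are correct and essentially identical to the paper's. The surjectivity strategy is also the paper's, but the part you flag as ``the main obstacle'' is precisely where the actual content of the proof lies, and your proposal does not carry it out. Two concrete remarks:

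First, no ``correction of $Y_1 \wedge Y_2$'' is needed, and the cocycle condition $\D\phi = 0$ plays no further role after applying \eqref{eq:HKRRetract}. One checks directly that $X \coloneqq \wedge \circ \pr_1^{\tensor 2}(\phi) \in \ConVectFields^2(\mathcal{M})_\Wobs$ from $\phi \in \CCa^2(\mathcal{M})_\Wobs$, so $\hkr(X)$ is already constraint and hence $\D H(\phi) = \phi - \hkr(X) \in \CCa^2(\mathcal{M})_\Wobs$. After splitting off the $\Wobs$-part of $H(\phi)$ via \autoref{lem:DecompSymTenAlgebra}, the remaining $\psi_{\TotalNotWobs} \in (\RedSym\VectFields(\mathcal{M}))_{\TotalNotWobs}$ still satisfies $\D\psi_{\TotalNotWobs} \in \CCa^2(\mathcal{M})_\Wobs$.

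Second, the decisive step you are missing is this: combining \autoref{lem:deltaSplitting} with the complementarity in \autoref{lem:DecompSymTenAlgebra}~\ref{prop:DecompSymTenAlgebra_Ten} forces $\D\psi_{\TotalNotWobs} \in (\Tensor^2\RedSym\ConVectFields(\mathcal{M}))_{\NullNotVan}$. One then isolates the $\ell = 1$ piece of the shuffle coproduct, namely $\sum_i X_i \tensor (X_1 \vee \cdots \overset{i}{\wedge} \cdots \vee X_k)$, and reads off from the explicit description of $(\Tensor^2\RedSym\ConVectFields(\mathcal{M}))_{\NullNotVan}$ in \eqref{eq:SplittingTensor} that there is \emph{exactly one} index with $X_i \in \Secinfty(TC^\perp)$ and that all remaining $X_j$ lie in $\Secinfty(D)$. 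This shows $\psi_{\TotalNotWobs} \in \RedSym\Secinfty(D) \vee \Secinfty(TC^\perp)$ outright; nothing has to be ``absorbed''. Note in particular that your description of the residual pieces (iii) as ``containing at least one $D^\perp$-factor'' is incomplete: terms with two or more $TC^\perp$-factors and no $D^\perp$-factor are also in $(\RedSym\VectFields(\mathcal{M}))_{\TotalNotWobs}$ but not in $\RedSym\Secinfty(D) \vee \Secinfty(TC^\perp)$, and the $\ell=1$ analysis is exactly what rules them out.
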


\begin{proof}
Let us first show that $\mathcal{U}$ is well-defined.
For this, note that on $\ConVectFields(\mathcal{M})_\Wobs$ it is well-defined by
\autoref{prop:hkrToWobs}.
Moreover, $\D$ maps $\RedSym \Secinfty(D) \vee \Secinfty(TC^\perp)$ to $(\Tensor^2\RedSym\ConVectFields(\mathcal{M}))_\Wobs$
and thus by \autoref{prop:ConMultSymbolCalculus} the map $\mathcal{U}$ is well-defined and as a composition of linear maps it is itself linear.

Next, we show that $\mathcal{U}$ is injective.
Thus assume
$0 = \mathcal{U}(X,\psi) = [X] + [\Op^\nabla(\D\psi)]$.
Since $X$ is antisymmetric and $\Op^\nabla(\D\psi)$ is symmetric, both summands have to vanish separately.
Then by the classical HKR-Theorem it follows $X = 0$.
Moreover, since $\Op^\nabla$ is an isomorphism it follows
that $[\D\psi] = 0$, and hence there exists $\psi' \in (\RedSym\ConVectFields(\mathcal{M}))_\Wobs$ such that 
$\D\psi = \D\psi'$.
Since $\D$ preserves the symmetric degree, we know that
$\psi, \psi' \in \RedSym^k\VectFields(M)$ for some $k \geq 2$
and thus by \autoref{lem:KernelD} from
$\D(\psi - \psi') = 0$ it follows $\psi = \psi' = 0$.

It remains to show that $\mathcal{U}$ is surjective.
Thus let
$\phi \in \CCa^2(\mathcal{M})_\Wobs$
be given such that $\D\phi = 0$.
Then by \eqref{eq:HKRRetract} we know that there exists
$\psi \in \RedSym\VectFields(M)$
such that $\phi = \D\psi + \hkr(X)$,
where $X = \wedge \circ \pr_1^{\tensor 2}(\phi) \in \VectFields^2(M)$.
Since $\phi \in \CCa^2(\mathcal{M})_\Wobs$ we also get
$X \in \ConVectFields^2(\mathcal{M})_\Wobs$.
Then it follows $\D\psi = \phi - \hkr(X) 
\in \CCa^2(\mathcal{M})_\Wobs$.
By \autoref{lem:DecompSymTenAlgebra} we find a decomposition
$\psi = \psi_\Wobs + \psi_\TotalNotWobs$, with
$\psi_\Wobs \in \bigl(\RedSym\ConVectFields(\mathcal{M})\bigr)_\Wobs$
and
$\psi_\TotalNotWobs \in \bigl(\RedSym\ConVectFields(\mathcal{M})\bigr)_\TotalNotWobs$.
Thus we get $\phi = \D\psi_\Wobs + \D\psi_\TotalNotWobs + \hkr(X)$, from which
$\D\psi_\TotalNotWobs \in \CCa^2(\mathcal{M})_\Wobs$
follows.
Since $\ker(\D\at{\RedSym\VectFields(M)}) = \VectFields(M)$
by \autoref{lem:KernelD}
we can assume that
$\psi_\TotalNotWobs = X_1 \vee \cdots \vee X_{k}
\in \bigl(\RedSym^{k}\VectFields(\mathcal{M})\bigr)_\TotalNotWobs$
is a factorizing tensor with $k \geq 2$.
Then by \eqref{eq:SplittingSym} there exists some $i \in \{1,\dotsc,k\}$
with $X_i \in \Secinfty(TC^\perp)$.
Moreover, from \autoref{lem:deltaSplitting} we know that 
$\D \psi_\TotalNotWobs \in \bigl(\Tensor^2\RedSym\VectFields(\mathcal{M})\bigr)_\NullNotVan$.
Then from
\begin{align*}
	\D \psi_\TotalNotWobs
	&=-\sum_{\ell=1}^{k-1}\sum_{\sigma \in \Shuffle(\ell,k-\ell)}
	(X_{\sigma(1)}\cdots X_{\sigma(\ell)})
		\tensor
	(X_{\sigma(\ell+1)} \cdots X_{\sigma(k)})
\end{align*}
and the symmetric grading we know that
\begin{equation*}
	\sum_{\sigma \in \Shuffle(\ell,k-\ell)}
	(X_{\sigma(1)} \vee \cdots \vee X_{\sigma(\ell)})
		\tensor
	(X_{\sigma(\ell+1)} \vee \cdots \vee X_{\sigma(k)})
		\in
	\bigl(\Tensor^2\RedSym\VectFields(\mathcal{M})\bigr)_\NullNotVan
\end{equation*}
for all $\ell = 1, \dotsc,k-1$.
In particular, we get for $\ell=1$
\begin{equation*}
	\sum_{i=1}^k
	X_i \tensor (X_1 \vee \cdots \overset{i}{\wedge} \cdots \vee X_{k})
		\in
	\bigl(\Tensor^2\RedSym\VectFields(\mathcal{M})\bigr)_\NullNotVan.
\end{equation*}
By the definition of 
$\bigl(\Tensor^2\RedSym\VectFields(\mathcal{M})\bigr)_\NullNotVan$
we know that
\begin{align*}
	\sum_{\substack{i\in\{1,\dotsc,k\}\\ X_i \in \VectFields(\mathcal{M})_\TotalNotWobs}}
	X_i \tensor (X_1 \vee \cdots \overset{i}{\wedge} \cdots \vee X_{k})
	&\in \Secinfty(TC^\perp)
		\tensor \bigl(\RedSym\VectFields(\mathcal{M})\bigr)_\NullNotVan
	\\
	\shortintertext{and}
	\sum_{\substack{i\in\{1,\dotsc,k\}\\ v_i \in \VectFields(\mathcal{M})_\Wobs}}
	X_i \tensor (X_1 \vee \cdots \overset{i}{\wedge} \cdots \vee X_{k})
	&\in \Secinfty(D)
	\tensor \bigl(\RedSym\VectFields(\mathcal{M})\bigr)_\TotalNotWobs.
\end{align*}
The first sum collapses to a single summand, since otherwise
$(X_1 \vee \cdots \overset{i}{\wedge} \cdots \vee X_{n})$
could not be an element of
$\bigl(\RedSym_{\tensor}\ConVectFields(\mathcal{M})\bigr)_\NullNotVan$.
Thus there is exactly one $i \in \{1,\dotsc, k\}$ such that $X_i \in \Secinfty(TC^\perp)$.
The second sum shows that for all other $i$ we have $X_i \in \Secinfty(D)$. 
Thus we obtain
\begin{equation*}
	\psi_\TotalNotWobs \in \Sym^{n-1}\Secinfty(D) \vee \Secinfty(TC^\perp)
\end{equation*}
for $n \geq 2$.
Since $\D\pr_1(\psi_\TotalNotWobs) = 0$
we obtain
$\phi = \D\psi_\Wobs +
\mathcal{U}(X,\psi_\TotalNotWobs - \pr_1(\psi_\TotalNotWobs))$,
showing that $\mathcal{U}$ is surjective in cohomology.
\end{proof}

\begin{remark}
	In the definition of $\mathcal{U}$ we use implicitly the choice of a tubular neighbourhood
	of $C$ and a corresponding bump function in order to use the direct sum decomposition of
	\autoref{lem:DecompSymTenAlgebra}.
	In particular, this explains why $\mathcal{U}$ is not a $\Cinfty(M)$-module morphism.
\end{remark}

It should be stressed that by the classical HKR Theorem the Hochschild cohomology $\HCdiff^\bullet(M)$
for a manifold $M$ is given by antisymmetric bi-differential operators of order $(1,1)$.
This is not true in the constraint setting: While $\ConVectFields(\mathcal{M})_\Wobs$ still consists
of (special) antisymmetric bi-differential operators, the contributions of
$\RedSym \Secinfty(D) \vee \Secinfty(TC^\perp)$
yield \emph{symmetric} bi-differential operators.
Moreover, these can be of arbitrarily high differentiation order in each slot.

\begin{proposition}
	Let $\mathcal{M} = (M,C,D)$ be a constraint manifold
	and let $\nabla$ be a torsion-free constraint covariant derivative on $\mathcal{M}$.
	Then the isomorphism
	$\mathcal{U}$ from \autoref{thm:SecondConCohom}
	restricts to an isomorphism
	\begin{equation}
		\mathcal{U} \colon \ConVectFields^2(\mathcal{M})_\Null
			\oplus \RedSym \Secinfty(D) \vee \Secinfty(TC^\perp)
		\to
		\HHdiff^2(\mathcal{M})_\Null.
	\end{equation}
\end{proposition}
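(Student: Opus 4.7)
The plan is to replay the proof of \autoref{thm:SecondConCohom} while tracking the $\Null$-structure throughout. Injectivity of the restricted map is automatic from the injectivity of $\mathcal{U}$, so the real work consists of verifying well-definedness of the restriction and surjectivity onto $\HHdiff^2(\mathcal{M})_\Null$.

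For well-definedness, by linearity it suffices to show that $\hkr(X)$ with $X \in \ConVectFields^2(\mathcal{M})_\Null$ and $\Op^\nabla(\D \psi)$ with $\psi \in \RedSym \Secinfty(D) \vee \Secinfty(TC^\perp)$ both admit representatives in $\HCdiff^2(\mathcal{M})_\Null$. Via the symbol calculus and the restriction \eqref{eq:RestrictedOp_0}, this reduces to membership in
\[
\bigl(\Tensor^2 \RedSym \VectFields(\mathcal{M})\bigr)_\Null
=
(\Sym \VectFields(\mathcal{M}))_\Null \tensor \Sym \VectFields(M) + \Sym \VectFields(M) \tensor (\Sym \VectFields(\mathcal{M}))_\Null.
\]
For $\hkr(X)$ with $X = X_1 \wedge X_2$ and $X_2 \in \VectFields(\mathcal{M})_\Null$, both tensors $X_1 \tensor X_2$ and $X_2 \tensor X_1$ have a factor in $(\Sym^1 \VectFields(\mathcal{M}))_\Null = \VectFields(\mathcal{M})_\Null$. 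For $\psi = X_1 \vee \cdots \vee X_{k-1} \vee Y$ with $X_i \in \Secinfty(D) \subseteq \VectFields(\mathcal{M})_\Null$ and $Y \in \Secinfty(TC^\perp)$, each shuffle summand of $\D \psi$ has on at least one slot a symmetric product built solely from $D$-sections, which lies in $(\Sym^\bullet \VectFields(\mathcal{M}))_\Null$.

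For surjectivity, let $\phi \in \HCdiff^2(\mathcal{M})_\Null$ be a cocycle representing an arbitrary class $[\phi] \in \HHdiff^2(\mathcal{M})_\Null$, and transport it through $(\Op^\nabla)^{-1}$ to an element of $\bigl(\Tensor^2 \RedSym \VectFields(\mathcal{M})\bigr)_\Null$. Applying \eqref{eq:HKRRetract} yields $\phi = \hkr(X) + \D H(\phi)$ with $X = \wedge \circ \pr_1^{\tensor 2}(\phi)$. The key observation is that $\pr_1$ maps $(\Sym \VectFields(\mathcal{M}))_\Null$ into $(\Sym^1 \VectFields(\mathcal{M}))_\Null = \VectFields(\mathcal{M})_\Null$, so splitting $\phi$ along the two summands above produces
\[
\pr_1^{\tensor 2}(\phi) \in \VectFields(\mathcal{M})_\Null \tensor \VectFields(M) + \VectFields(M) \tensor \VectFields(\mathcal{M})_\Null,
\]
whence $X \in \VectFields(M) \wedge \VectFields(\mathcal{M})_\Null = \ConVectFields^2(\mathcal{M})_\Null$. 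From here the argument proceeds identically to \autoref{thm:SecondConCohom}: decompose $H(\phi) = \psi_\Wobs + \psi_\TotalNotWobs$ via \autoref{lem:DecompSymTenAlgebra}, and apply \autoref{lem:deltaSplitting} together with the shuffle analysis to force $\psi_\TotalNotWobs \in \RedSym \Secinfty(D) \vee \Secinfty(TC^\perp)$. Then $\mathcal{U}(X, \psi_\TotalNotWobs) = [\phi]$ exhibits $[\phi]$ in the image of the restricted map.

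The main obstacle is really just the $\pr_1^{\tensor 2}$ step: once the $\Null$-structure has been pulled down to symmetric degree $1$ in one of the tensor slots, the entire machinery from the proof of \autoref{thm:SecondConCohom} carries over without change, and the remaining verifications reduce to direct bookkeeping with the constraint symmetric algebra.
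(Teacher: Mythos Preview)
The proposal is correct and follows the same strategy as the paper's proof, which simply states that the arguments from \autoref{thm:SecondConCohom} apply verbatim to the $\Null$-component. You have in fact spelled out the one additional observation that is needed, namely that $\pr_1^{\tensor 2}$ carries $\bigl(\Tensor^2\RedSym\VectFields(\mathcal{M})\bigr)_\Null$ into $\VectFields(\mathcal{M})_\Null \tensor \VectFields(M) + \VectFields(M) \tensor \VectFields(\mathcal{M})_\Null$, which is precisely what forces $X \in \ConVectFields^2(\mathcal{M})_\Null$.
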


\begin{proof}
	The statement follows directly by applying the arguments from the proof of \autoref{thm:SecondConCohom} to the subspace
	$\HHdiff^2(\mathcal{M})_\Null$.
\end{proof}

The canonical morphism $\HHdiff^2(\mathcal{M})_\Wobs \to \HHdiff^2(M)$
corresponds under $\mathcal{U}$ and $\hkr$ to the projection
$(X,\psi) \mapsto X$.
Similarly, the reduction $\HHdiff^2(\mathcal{M})_\Wobs \to \HHdiff^2(\mathcal{M})_\Wobs / \HHdiff^2(\mathcal{M})_\Null \simeq \HHdiff^2(\mathcal{M}_\red)$
corresponds to $(X,\psi) \mapsto [X] \in \VectFields^2(\mathcal{M})_\red \simeq \VectFields^2(\mathcal{M}_\red)$.

\subsection{Reduction of Infinitesimal Star Products}

In deformation quantization a (formal) star product on a manifold $M$ is given by a bilinear map $\star \colon \Cinfty(M)\formal{\hbar} \tensor \Cinfty(M)\formal{\hbar} \to \Cinfty(M)\formal{\hbar}$
with
\begin{equation}
	\star = \mu_0 + \sum_{r=1}^\infty \hbar^r C_r
\end{equation}
where $\mu_0$ is the pointwise multiplication on $\Cinfty(M)$,
fulfilling the following properties:
\begin{cptitem}
	\item $\star$ is associative.
	\item Every $C_r$ is a bi-differential operator on $\Cinfty(M)$.
	\item It holds $f \star 1 = f = 1 \star f$ for all $f \in \Cinfty(M)\formal{\hbar}$.
\end{cptitem}
Note that every such star product induces a Poisson structure on $M$ with Poisson bracket given by$\{\argument,\argument\} = -\I C_1^-$, where $C_1^-(f,g) \coloneqq C_1(f,g) - C_1(g,f)$.
Two star products $\star$ and $\star'$ are called equivalent if there exists
$S = \id + \sum_{r=1}^\infty \hbar^r S_r$
with $S_r \in \Diffop(M)$
such that
\begin{equation}
	\label{eq:EquivStarProducts}
	f \star' g = S^{-1}\bigl(S(f) \star S(g)\bigr).
\end{equation}
It is well-known that the set of equivalence classes of infinitesimal star products, i.e. star products that satisfy associativity only up to order $1$ in $\hbar$, 
is given by the second Hochschild cohomology $\HHdiff^2(M)$.

Let again $\mathcal{M} = (M,C,D)$ be a constraint manifold.
Then a star product $\star$ on $M$ is called \emph{constraint}
if all $C_r$ are constraint bi-differential operators.
Similarly, an equivalence $S$ between two constraint star products is called \emph{constraint} if all $S_r$ are constraint differential operators. 

\begin{proposition}
	\label{prop:InfinitesimalDeformations}
	Let $\mathcal{M} = (M,C,D)$ be a constraint manifold.
	Moreover, let $\star$ and $\star'$ be two constraint star products that agree up to order $k$.
	\begin{propositionlist}
		\item Then $\delta(C_{k+1} - C'_{k+1}) = 0$.
		\item The star products $\star$ and $\star'$ are constraint equivalent up to order $k+1$ if and only if $C_{k+1} - C'_{k+1}$ is constraint exact, i.e.
		exact in $\HCdiff(\mathcal{M})_\Wobs$.
	\end{propositionlist}
\end{proposition}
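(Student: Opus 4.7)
For part (i) I would follow the classical Gerstenhaber associativity argument, transplanted into the constraint setting. Writing the associativity equation $\sum_{r+s=n}C_r(C_s(f,g),h)=\sum_{r+s=n}C_r(f,C_s(g,h))$ order by order in $\hbar$, the order $k+1$ identity for $\star$ reads
\begin{equation*}
\delta C_{k+1}(f,g,h) = \sum_{\substack{r+s=k+1\\ r,s\geq 1}}\bigl(C_r(C_s(f,g),h)-C_r(f,C_s(g,h))\bigr),
\end{equation*}
and the analogous identity holds for $\star'$. Because $\star$ and $\star'$ coincide up to order $k$, the right-hand sides agree, so subtracting gives $\delta(C_{k+1}-C'_{k+1})=0$. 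Since $C_{k+1}$ and $C'_{k+1}$ are both constraint bi-differential operators, their difference lies in $\HCdiff^2(\mathcal{M})_\Wobs$, and $\delta$ preserves the constraint subcomplex, so this identity holds inside $\HCdiff(\mathcal{M})_\Wobs$.

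For the forward direction of part (ii), I would start from a constraint equivalence $S=\id+\sum_{r\geq1}\hbar^r S_r$ realising $\star'=S^{-1}\circ\star\circ(S\tensor S)$ modulo $\hbar^{k+2}$. I first reduce to the case $S_1=\dotsb=S_k=0$ by an inductive adjustment: if $S_r=0$ for $r<j\leq k$, then expanding the equivalence at order $j$ yields $C_j-C'_j=\delta S_j$, but by hypothesis $C_j=C'_j$, so $\delta S_j=0$ and $S_j\in\HCdiff^1(\mathcal{M})_\Wobs$ is a constraint cocycle; replacing $S$ by $(\id-\hbar^j S_j)\circ S$ (still constraint, since $S_j$ is) removes the degree-$j$ term without changing the equivalence property. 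After this reduction, expanding $S^{-1}\circ\star\circ(S\tensor S)$ at order $k+1$ gives, exactly as in the standard computation, $C'_{k+1}=C_{k+1}+\delta S_{k+1}$. Hence $C_{k+1}-C'_{k+1}=-\delta S_{k+1}$ with $S_{k+1}\in\HCdiff^1(\mathcal{M})_\Wobs$, which is constraint exactness.

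For the reverse direction, assume $C_{k+1}-C'_{k+1}=\delta T$ with $T\in\HCdiff^1(\mathcal{M})_\Wobs$, and define $S\coloneqq\id-\hbar^{k+1}T$. Since $T$ is a constraint differential operator, $S$ is a constraint equivalence candidate. A direct expansion shows that the conjugated star product $S^{-1}\circ\star\circ(S\tensor S)$ agrees with $\star$ through order $k$ and at order $k+1$ produces $C_{k+1}-\delta T=C'_{k+1}$, so it agrees with $\star'$ through order $k+1$, establishing constraint equivalence to that order.

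The only genuine subtlety, and what I would regard as the main thing to verify carefully, is that the inductive correction in the forward direction can indeed be performed inside the constraint world: each $S_j$ produced is automatically in $\HCdiff^1(\mathcal{M})_\Wobs$ because $S$ is constraint and each lower-order $S_r$ used in the correction is constraint by induction, hence $(\id-\hbar^j S_j)\circ S$ remains a constraint formal diffeomorphism. Everything else is formal manipulation inside the constraint differential Hochschild complex, which is possible precisely because $\HCdiff^\bullet(\mathcal{M})_\Wobs$ is a subcomplex of $\HCdiff^\bullet(M)$ closed under composition and $\delta$.
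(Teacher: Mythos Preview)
Your argument tracks the paper's closely: part (i) by subtracting the order-$(k+1)$ associativity relations, the converse of (ii) via $S=\id+\hbar^{k+1}T$, and the forward direction of (ii) by first reducing to an equivalence that starts at order $k+1$. The paper simply asserts this last reduction (``we can assume that the equivalence is given by $S=\id+\hbar^{k+1}S_{k+1}+\dots$''); you go further and try to justify it by an inductive correction.

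That justification, however, does not work as written. Replacing $S$ by $(\id-\hbar^j S_j)\circ S$ does \emph{not} preserve the equivalence between $\star$ and $\star'$: with the convention $\star'=S^{-1}\circ\star\circ(S\otimes S)$, the composite is still an equivalence from $\star$ to $\star'$ only if $\id-\hbar^j S_j$ is a self-equivalence of $\star$ up to order $k+1$. The condition $\delta S_j=0$ handles order $j$, but at order $j+r$ one picks up Gerstenhaber-type terms $C_r(S_j f,g)+C_r(f,S_j g)-S_j(C_r(f,g))$ which need not vanish. After your correction you are therefore comparing $\star'$ to a modified product that no longer agrees with $\star'$ through order $k$, and the induction collapses. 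This is not a mere technicality: for $k=1$, take $\star$ the Moyal product on $\mathbb{R}^2$ and $S=\id+\hbar X$ with $X$ a vector field satisfying $\mathcal{L}_X\pi\neq 0$; then $\star'=S^{-1}\circ\star\circ(S\otimes S)$ has $C_1'=C_1$, yet $C_2-C_2'$ has nonzero antisymmetric part and hence cannot be a Hochschild coboundary (every $\delta E$ with $E\in\HCdiff^1$ being symmetric), so no equivalence starting at order $2$ exists at all. The reduction step---both yours and the paper's ``we can assume''---is thus valid only if ``constraint equivalent up to order $k+1$'' is read as equivalence \emph{of extensions} of the common order-$k$ truncation, i.e.\ via an $S\equiv\id\bmod\hbar^{k+1}$, in which case the reduction is vacuous. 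For the paper's actual application only $k=0$ is used, where the issue does not arise.
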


\begin{proof}
	The first part follows by subtracting the associativity conditions for $\star$ and $\star'$ in order $k+1$ of $\hbar$.
	For the second part assume that $\star$ and $\star'$ are constraint equivalent up to order $k+1$.
	Since $\star$ and $\star'$ agree up to order $k$ we can assume that the equivalence is given by 
	$S = \id + \hbar^{k+1}S_{k+1} + \hbar^{k+2}(\dotsc)$
	with $S_{k+1} \in \HCdiff^1(\mathcal{M})_\Wobs$.
	Then \eqref{eq:EquivStarProducts} yields in order $k+1$ of $\hbar$ exactly
	$\delta S_{k+1} = C_{k+1} - C'_{k+1}$.
	Finally, assume that $C_{k+1} - C'_{k+1} = \delta B$ for some 
	$B \in \HCdiff^1(\mathcal{M})_\Wobs$.
	Then $S \coloneqq \id + \hbar^{k+1} B$
	is a constraint equivalence between $\star$ and $\star'$ up to order $k+1$.
\end{proof}

\begin{corollary}
	Let $\mathcal{M} = (M,C,D)$ be a constraint manifold.
	The set of constraint equivalence classes of infinitesimal constraint star products on $\mathcal{M}$ is given by
	$\ConVectFields^2(\mathcal{M})_\Wobs
	\oplus \RedSym \Secinfty(D) \vee \Secinfty(TC^\perp)$.
\end{corollary}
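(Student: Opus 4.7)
The plan is to observe that the statement is essentially a reformulation of \autoref{thm:SecondConCohom} once the infinitesimal star product data is translated into Hochschild cohomological language via \autoref{prop:InfinitesimalDeformations}.

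First, I would unpack what an infinitesimal constraint star product is: a bilinear map $\star = \mu_0 + \hbar C_1$ (modulo $\hbar^2$) with $C_1$ a constraint bi-differential operator, satisfying associativity up to order $1$ and the unit property. The unit property is baked into our convention that (multi-)differential operators vanish on constants, so the only remaining constraint on $C_1$ is associativity to first order, which is precisely the Hochschild cocycle condition $\delta C_1 = 0$. Thus the set of infinitesimal constraint star products is identified with the set of $2$-cocycles in $\HCdiff^\bullet(\mathcal{M})_\Wobs$.

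Next, I would apply \autoref{prop:InfinitesimalDeformations} with $k = 0$, noting that any two infinitesimal constraint star products trivially agree at order $0$ (both have $C_0 = \mu_0$). The proposition then gives that $\star = \mu_0 + \hbar C_1$ and $\star' = \mu_0 + \hbar C_1'$ are constraint equivalent up to order $1$ if and only if $C_1 - C_1'$ is a constraint coboundary in $\HCdiff^\bullet(\mathcal{M})_\Wobs$. Hence the set of constraint equivalence classes of infinitesimal constraint star products is in natural bijection with the quotient
\begin{equation*}
    \{C_1 \in \HCdiff^2(\mathcal{M})_\Wobs \mid \delta C_1 = 0\} / \delta \HCdiff^1(\mathcal{M})_\Wobs = \HHdiff^2(\mathcal{M})_\Wobs.
\end{equation*}

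Finally, I would invoke \autoref{thm:SecondConCohom}, which provides an explicit isomorphism $\mathcal{U} \colon \ConVectFields^2(\mathcal{M})_\Wobs \oplus \RedSym \Secinfty(D) \vee \Secinfty(TC^\perp) \to \HHdiff^2(\mathcal{M})_\Wobs$, yielding the desired classification. There is no genuine obstacle here since all the hard work has already been done in the theorem and proposition above; the only point to verify carefully is that the cocycle/coboundary conditions from \autoref{prop:InfinitesimalDeformations} really do translate one-to-one into elements and equivalences at the level of $\HHdiff^2(\mathcal{M})_\Wobs$, which is immediate from the definition of constraint Hochschild cohomology as the cohomology of the subcomplex $\HCdiff^\bullet(\mathcal{M})_\Wobs$.
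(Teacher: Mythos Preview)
Your proposal is correct and follows essentially the same approach as the paper: apply \autoref{prop:InfinitesimalDeformations} at $k=0$ to identify constraint equivalence classes of infinitesimal constraint star products with $\HHdiff^2(\mathcal{M})_\Wobs$, then invoke \autoref{thm:SecondConCohom} for the explicit description. The only difference is that you spell out in slightly more detail why associativity to first order is the cocycle condition and why the unit condition is automatic, which the paper leaves implicit.
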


\begin{proof}
	Apply \autoref{prop:InfinitesimalDeformations} for $k=0$ and use
	the identification
	$\HHdiff^2(\mathcal{M})_\Wobs
	\simeq \ConVectFields^2(\mathcal{M})_\Wobs
	\oplus \RedSym \Secinfty(D) \vee \Secinfty(TC^\perp)$
	from \autoref{thm:SecondConCohom}.
\end{proof}

We have seen before how $\HHdiff^2(\mathcal{M})_\Wobs$ is related to $\HHdiff^2(M)$
and $\HHdiff^2(\mathcal{M})_\red$.
We now understand that elements in $\RedSym \Secinfty(D) \vee \Secinfty(TC^\perp)$
correspond to non-equivalent infinitesimal constraint star products which become equivalent
when considered as star products without compatibility with reduction.
Moreover, all these infinitesimal constraint star products become equivalent after reduction.

{
	\footnotesize
	\printbibliography[heading=bibintoc]
}

\end{document}